\newcommand{\TOLTO}[1]{}
\newenvironment{proof}{{\bf Proof.\/}}{\hfill $\Box$\vskip 0.1in}
\newtheorem{theorem}{Theorem}[section] 
\newtheorem{definition}{Definition}[section] 
\newtheorem{lemma}[theorem]{Lemma} 
\newtheorem{corollary}[theorem]{Corollary}
\newtheorem{remark}[theorem]{Remark}
\newcommand{\tf}[1]{\ensuremath{{\mathit{tf}}(#1)}}
\newcommand{\gp}[1]{\ensuremath{gp(#1)}} 
\newcommand{\conv}[1]{\ensuremath{hull( #1 )}}
\newcommand{\sq}{\ensuremath{\hspace{2pt}\square\hspace{2pt}}}
\newcommand{\mvs}{mutual-visibility set\xspace}
\newcommand{\Mvs}{Mutual-visibility set\xspace}
\newcommand{\mvn}{mutual-visibility number\xspace}
\newcommand{\mvp}{{\sc Mutual-Visibility}\xspace}
\newcommand{\mva}{{\sc MV}\xspace}
\newcommand{\bfs}{{\sc BFS\_MV}\xspace}
\newcommand{\mvt}{{\sc Mutual-Visibility Test}\xspace}
\newcommand{\sat}{{\sc 3SAT}\xspace}
\definecolor{linecomment}{rgb}{0.95, 0.1, 0.1}
\title{\bf Mutual Visibility in Graphs%
}
\author{ 
{\sc Gabriele Di Stefano}
}
\date{  {\small 
        Department of Information Engineering, Computer Science and Mathematics \\[1mm] 
        University of L'Aquila -- Italy\\[1mm]
                gabriele.distefano@univaq.it
        } 
}
\begin{document}

\maketitle


\begin{abstract}
Let $G=(V,E)$ be a graph and $P\subseteq V$ a set of \emph{points}. Two points are \emph{mutually visible} if there is a shortest path between them without further points. $P$ is a \emph{\mvs} if its points are pairwise mutually visible.
The \emph{\mvn} of $G$ is the size of any largest \mvs. In this paper we start the study about this new invariant and the {\mvs}s in undirected graphs. We introduce the \mvp problem which asks to find a \mvs with a size larger than a given number. We show that this problem is  NP-complete, whereas, to check whether a given set of points is a \mvs is solvable in polynomial time. Then we study {\mvs}s and {\mvn}s on special classes of graphs, such as block graphs, trees, grids, tori, complete bipartite graphs, cographs. We also provide some relations of the \mvn of a graph with other invariants.
\end{abstract}



\section{Introduction}


Given a set of points in Euclidean space, they are mutually visible if and only if   no three of them are collinear. Then two points $p$ and $q$ are mutually visible when no further point is on the line segment $pq$.
A line segment in Euclidean space represents the shortest path between two points, but in more general topologies, this type of path (called geodesic) may not be unique.
Then, in general, two points are mutually visible when there exists at least a shortest path between them without further points.

In this paper, we investigate the mutual visibility of a set of points in topologies represented by graphs (e.g., see Figure~\ref{fig:tori}b).  
In particular, a fundamental problem is finding the maximum number of points in mutual visibility that a given graph can have. 
To this aim, consider the following new invariant: the \emph{\mvn} of a graph is the size of any largest \emph{\mvs}, that is, a subset of the vertices  (\emph{points}) that are in mutual visibility. 
To study this invariant from a computational point of view, we introduce the \mvp problem: find a \mvs with a size larger than a given number. We prove that this problem is NP-complete, whereas, to check whether the points of a given set are in mutual visibility is a problem solvable in $O(n^3)$ time for graphs with $n$ vertices.  Then, given this situation, our work proceeds by investigating the {\mvn} for special classes of graphs, showing how the \mvp problem can be solved in polynomial time. We also provide some relations of the \mvn of a graph with other invariants.

While these new concepts are interesting in themselves, their study is motivated by the fundamental role that mutual visibility plays in problems arising in the context of mobile entities, as shown below. Furthermore, points of a graph in mutual visibility may represent  entities on some nodes of a computer/social network that want to communicate in a efficient and ``confidential'' way, that is, in such a way that the exchanged messages  do not pass through other entities.

\paragraph{Related works}
Questions about sets of points and their mutual visibility in Euclidean plane have been investigated since the end of XIX century. Perhaps, the most famous problem was posed by Sylvester~\cite{sylvester1893}, who conjectured that it is not possible to arrange a finite set of points ``so that a right line through every two of them shall pass through a third, unless they all lie in the same right line''. A correct proof was given by Gallai~\cite{gallai44} some 40 years later, with a theorem now known as Sylvester--Gallai theorem.  In~\cite{dudeney917} Dudeney posed the celebrated and still open no-three-in-line problem:  find the maximum number of points that
can be placed in an $n\times n$  grid so that no three points lie on a line. 
In~\cite{hardy08}, Chapter III, it is shown how to place a set of points with integer positive coordinates $(i,j)$, $j\leq i$,  in such a way that each point is in mutual visibility with the origin $(0,0)$, by also maximizing the number of points with the same abscissa. 
This disposition shows interesting relations with the Farey series and the Euler's totient function $\phi$: the number of points with abscissa $n$ is exactly $\phi(n)$.

More recently, mutual visibility has been studied in the context of mobile entities modeled as points in the Euclidean plane, whose visibility can be obstructed by the presence of other mobile entities. The problem investigated in~\cite{DiLuna17} is perhaps the most basic:
starting from arbitrary distinct positions in the plane, within finite time the mobile entities must reach a configuration in which they are in distinct locations and they can all see each other. Since then, many papers have addressed the same subject (e.g., see~\cite{Aljohani18a,Bhagat20,Poudel2021,Sharma18}) and similar visibility problems were considered in different contexts where the entities are ``fat robots'' modeled as disks in the plane (e.g., see~\cite{Poudel19}) or are points on a grid based terrain and their movements are restricted only along grid lines (e.g., see~\cite{Adhikary18}).

Visibility problems were also studied on graphs. Wu and Rosenfeld~\cite{Rosenfeld94} considered the mutual visibility in pebbled graphs. 
They assumed that the visibility may be obstructed  by ``pebbles" placed on some vertices of the graph. Two unpebbled vertices $u,v$ of a pebbled graph $G$ are mutually visible if and only if there exists a shortest path $p$ in $G$ between $u$ and $v$ such that no vertex of $p$ is pebbled. 
In~\cite{Wu98} they consider edge pebblings and vertex pebblings, 
by showing that the visibility relations defined by edge and vertex pebblings are incomparable. 

Motivated by the Dudeney's problem, in~\cite{manuel:18} the {\sc General Position} problem was introduced. Few years before the same problem was posed in~\cite{chand:16}. A subset $S$ of vertices in a graph $G$ is a \emph{general position set} if no triple of vertices from $S$ lie in a common geodesic in $G$. The {\sc General Position} problem is to find a largest general position set of $G$, the order of such a set is the \emph{general position number} $\gp{G}$. Since its introduction, the general position number has been studied for several graph classes (e.g., grid networks~\cite{manuel:18b}, cographs and bipartite graphs~\cite{acckt:19},  graph classes with large general position number~\cite{tc:20}, Cartesian products of graphs~\cite{kpry:21}).

The difference between a general position set $S$ and a \mvs $P$ is that two vertices are in $P$ if there is a shortest path between them with no further vertex in $P$, whereas two vertices are in $S$ if for \emph{every} shortest path between them no further vertex is in $S$. The two concepts are intrinsically different, but closely related, since the vertices of a general position set are in mutual visibility.  

Again in the context of mobile entities, in~\cite{Aljohani18b} it is studied the
{\sc Complete Visitability}  problem of repositioning a given number of robots on the vertices of a graph so that each robot has a path to all others without
visiting an intermediate vertex occupied by any other robot.
Here, the required paths are not shortest paths and the studied graphs are restricted to the infinite squared grid and the infinite hexagonal grid, both embedded in the Euclidean plane.

\paragraph{Contribution}
In Section~\ref{sec:notation},  formal definitions of \mvs and \mvn are provided along with basic notations and some preliminary results. Algorithmic results about the \mvp problem are shown in Section~\ref{sec:complexity}. In Section~\ref{sec:graphs} we study the {\mvs}s and {\mvn}s for special classes of graphs. Comparisons between general position numbers and {\mvn}s for certain graph classes are provided in Sections~\ref{sec:notation} and~\ref{sec:graphs}. Concluding remarks and notes about further studies on the subject are provided in Section~\ref{sec:concl}.



\section{Notation and preliminaries}\label{sec:notation}
In this work we consider finite, simple, loopless, undirected and unweighted
graphs $(V,E)$ with vertex set $V$ and  edge set $E$. We use standard 
terminologies from~\cite{graph_classes_survey,graph_theory}, some of which are
briefly reviewed here.

\paragraph{Basic notation.} 
Let $G=(V,E)$ be a graph.  A {\em subgraph} of $G$ is a graph having all its vertices
and edges in $G$. 
Given a subset $S$ of $V$, the  {\em induced subgraph} $G[S]$ of $G$ 
is the maximal subgraph of $G$ with vertex set $S$.  The subgraph of $G$ induced by $V\setminus S$ is denoted by $G-S$, and $G-x$
stands for $G-\{x\}$. 
If $v$ is a vertex of $G$, by $N_G(v)$ we denote the {\em neighbors} of 
$v$,  that is, the set of vertices that are adjacent to $v$, and 
by $N_G[v]$ we denote the {\em closed neighbors} of $v$, that is $N_G(v)\cup
\{v\}$. 
The number of edges incident to a vertex $v$ of a graph $G$ is the \emph{degree} of that vertex and is denoted $deg_G(v)$. Then $deg_G(v)=|N_G(v)|$ and
the maximum degree 
is denoted $\Delta(G)$.
 If $|N_G(v)|=1$, $v$ is called \emph{pendant} vertex.
Two vertices $u,v$ are \emph{true twins} if $uv \in E$ and $N_G[u]=N_G[v]$ and are \emph{false twins} if $uv \not \in E$ and $N_G(u)=N_G(v)$.
The operation of extending a graph by adding a new vertex which has a twin
in the obtained graph, is called \emph{splitting}~\cite{bandelt/mulder:86}.


A sequence of pairwise distinct vertices $(x_0, x_1,\ldots, x_n)$ is a {\em 
path} in $G$ if $x_{i}x_{i+1} \in E$ for $0\leq i < n$, and is an  {\em induced
path} 
if $G[\{x_0, \ldots, x_n\}]$ has $n$ edges. The \emph{length} of an induced
path is the number of its edges.  A {\em cycle}  in $G$ is a path $(x_0, \ldots, x_{n-1})$, $n\geq 3$, where also $x_{0}x_{n-1}\in E$. A \emph{$(x,y)$-path} is a path from $x$ to $y$. A graph $G$ is {\em connected} 
if for each pair of vertices $x$ and $y$ of  $G$ there is a $(x,y)$-path in $G$. In a connected graph $G$, the length of a shortest $(x,y)$-path  is called {\em distance} and is denoted by $d_G(x,y)$. The longest distance in a graph is its \emph{diameter}. A \emph{connected component} of $G$ is a maximal connected subgraph of $G$.
A vertex $x$ is an \emph{articulation vertex} if $G-x$ has more connected components than $G$. A graph $G=(V,E)$ is \emph{biconnected}  if $G-x$ is connected, for each $x\in V$.

A subgraph $H$ of $G=(V,E)$ is said to be \emph{convex} if all shortest paths
in $G$ between vertices of $H$ actually belong to $H$. The \emph{convex hull} of a subset $V'$ of vertices – denoted \emph{\conv{V'}} – is defined as the smallest convex subgraph containing $V'$. The \emph{hull number} $h(G)$ is the minimum cardinality among the subsets $V'$ of $V$ with $\conv{V'} =G$.

\paragraph{Operations on graphs} If $G$ is a graph, $\overline{G}$ denotes its \emph{complement}, that is the graph on the same vertices such that two distinct vertices of $\overline{G}$ are adjacent if and only if they are not adjacent in $G$. 
Given two graphs $G_1=(V_1,E_1)$ and $G_2=(V_2,E_2)$, such that $V_1\cap
V_2=\emptyset$, the \emph{disjoint union} $G_1\cup G_2$ denotes the graph
$(V_1\cup V_2, E_1\cup E_2)$; the \emph{join} $G_1 + G_2$ denotes the graph consisting in $G_1\cup G_2$ and all edges joining $V_1$ with $V_2$, that is $(V_1\cup V_2, E_1\cup E_2\cup\{xy~|~x\in V_1, y \in V_2\})$. To define the \emph{Cartesian product} $G_1 \sq G_2=(V,E)$, consider any two vertices $u=(u_1, u_2)$ and $v = (v_1, v_2)$ in $ V = V_1 \times V_2$. Then $uv\in E$  whenever either $u_1 = v_1$  and $u_2v_2\in E_2$ or $u_2 = v_2$ and $u_1v_1 \in E_1$.
We call $G_1$ and $G_2$ \emph{isomorphic}, and
write $G_1\sim G_2$ if there exists a bijection $\varphi :V_1 \rightarrow V_2$
with $xy \in E_1 \iff \varphi(x)\varphi(y) \in E_2$ for all $x,y \in
V_1$. 
%
\paragraph{Special graphs} In this paper we use
some special graphs. $K_n$ denotes the \emph{complete graph} (or \emph{clique})
with $n$ vertices and $n(n-1)/2$ edges. The \emph{clique number} $\omega(G)$ of a graph $G$ is the number of vertices in a maximum clique in $G$. $P_n$ denotes the  \emph{path graph} with $n$ vertices and
$n-1$ edges. $C_n$ denotes the  \emph{cycle graph} with $n$ vertices and
$n$ edges. Finally, $K_{m,n}=\overline{K_m}+\overline{K_n}$ denotes the \emph{complete bipartite graph}. A \emph{tree} is a connected graph without cycles and its pendant vertices are called \emph{leaves}.  The tree $K_{1,n}$ is called \emph{star} and can be obtained by adding $n$ pendant vertices to a
single vertex, called the \emph{center} of the star.
The graph $C_3$ is also called \emph{triangle}. 
A \emph{grid graph} $\Gamma_{m,n}=P_m\sq P_n$ is the Cartesian product of two paths $P_m$ and $P_n$. For $m\geq 3$ and $n\geq 3$ a graph $T_{m,n} =C_m \sq C_n$  obtained by the Cartesian product of two cycle graphs is called \emph{torus}.
 A connected graph obtained from $K_1$ by a sequence of splittings is called \emph{cograph}.

\begin{figure}[t]
   \graphicspath{{fig/}} 
 \begin{center}
    {\scalebox{1.0}{
\begingroup%
  \makeatletter%
  \providecommand\color[2][]{%
    \errmessage{(Inkscape) Color is used for the text in Inkscape, but the package 'color.sty' is not loaded}%
    \renewcommand\color[2][]{}%
  }%
  \providecommand\transparent[1]{%
    \errmessage{(Inkscape) Transparency is used (non-zero) for the text in Inkscape, but the package 'transparent.sty' is not loaded}%
    \renewcommand\transparent[1]{}%
  }%
  \providecommand\rotatebox[2]{#2}%
  \newcommand*\fsize{\dimexpr\f@size pt\relax}%
  \newcommand*\lineheight[1]{\fontsize{\fsize}{#1\fsize}\selectfont}%
  \ifx\svgwidth\undefined%
    \setlength{\unitlength}{215.77636552bp}%
    \ifx\svgscale\undefined%
      \relax%
    \else%
      \setlength{\unitlength}{\unitlength * \real{\svgscale}}%
    \fi%
  \else%
    \setlength{\unitlength}{\svgwidth}%
  \fi%
  \global\let\svgwidth\undefined%
  \global\let\svgscale\undefined%
  \makeatother%
  \begin{picture}(1,0.47953667)%
    \lineheight{1}%
    \setlength\tabcolsep{0pt}%
    \put(0,0){\includegraphics[width=\unitlength,page=1]{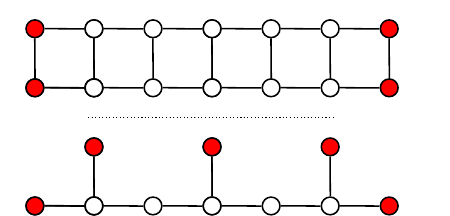}}%
    \put(-0.00307755,0.45074751){\color[rgb]{0,0,0}\makebox(0,0)[lt]{\lineheight{1.25}\smash{\begin{tabular}[t]{l}$(u_0,v_0)$\end{tabular}}}}%
    \put(0.25987157,0.45136804){\color[rgb]{0,0,0}\makebox(0,0)[lt]{\lineheight{1.25}\smash{\begin{tabular}[t]{l}$(u_2,v_0)$\end{tabular}}}}%
    \put(0.52375433,0.45012668){\color[rgb]{0,0,0}\makebox(0,0)[lt]{\lineheight{1.25}\smash{\begin{tabular}[t]{l}$(u_4,v_0)$\end{tabular}}}}%
    \put(0.78534995,0.449506){\color[rgb]{0,0,0}\makebox(0,0)[lt]{\lineheight{1.25}\smash{\begin{tabular}[t]{l}$(u_6,v_0)$\end{tabular}}}}%
  \end{picture}%
\endgroup%
}} 
 \end{center}
 \caption{ A grid graph $\Gamma_{2,7}$ with $\mu(\Gamma_{2,7})=4$ and an induced subgraph $H$ of  $\Gamma_{2,7}$ that is a tree with five leaves. For each graph, vertices in red are  points of a maximum {\mvs}, then $\mu(H)=5>\mu(\Gamma_{2,7})$. } 
\label{fig:noconvex}
\end{figure}
\paragraph{Preliminaries} Let $G=(V,E)$ be a graph and $P\subseteq V$ a set of \emph{points}. Two points are \emph{mutually visible} if there is a shortest path between them with no further point. $P$ is a \emph{\mvs} if its points are pairwise mutually visible.
The \emph{\mvn} of $G$ is the size of any largest \mvs of $G$ and it is denoted $\mu(G)$. By $M(G)$ we denote the set containing all the largest {\mvs}s of $G$. Formally:
$$M(G)=\{P~|~P\subseteq V \mbox{ is a \mvs and } |P|=\mu(G)\}$$


Notice that given a graph $G$ and a set of points $P$, the mutual visibility relation between two points in $P$ is reflexive, symmetric, but not transitive. Then it is different from the visibility relations studied in~\cite{Wu98}, that are all transitive.

Let $H=(V_H,E_H)$ be an induced subgraph of a graph $G$. If $P$ is a \mvs in $G$ then  $P\cap V_H$ is not necessarily a \mvs of $H$. For example, consider a cycle graph $C_n$, $n\geq 4$: it is easy to find a maximum \mvs $P$ of size three. Now consider an induced subgraph $C_n -v$, where $v\not \in P$: it is a path graph. All the points in $P$ are in $C_n -v$, but they are not mutually visible, since one of them is between the other two.  However, the following lemma holds for convex subgraphs of a given graph.
\begin{lemma}\label{lem:mv_subset}
 Let $H=(V_H,E_H)$ be a convex subgraph of $G=(V,E)$. Let $P\subseteq V$ be a \mvs of $G$. Then  $P\cap V_H$ is a \mvs of $H$.
\end{lemma}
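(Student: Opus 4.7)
The plan is to unfold the definitions and use convexity to transfer a witnessing shortest path from $G$ down to $H$. Fix two arbitrary points $u,v \in P \cap V_H$; I need to exhibit a shortest $(u,v)$-path in $H$ that avoids all other points of $P \cap V_H$.

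First, since $P$ is a mutual-visibility set of $G$, there exists a shortest $(u,v)$-path $\pi$ in $G$ whose internal vertices are disjoint from $P$. The key step is then to invoke convexity of $H$: because $u,v \in V_H$ and $H$ is convex in $G$, every shortest $(u,v)$-path in $G$ is contained in $H$, so in particular $\pi \subseteq V_H$. This also gives $d_H(u,v) = d_G(u,v)$, so the length of $\pi$ equals $d_H(u,v)$, meaning $\pi$ is still a shortest $(u,v)$-path when viewed inside $H$.

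Finally, the internal vertices of $\pi$ lie in $V_H \setminus P$, hence in $V_H \setminus (P \cap V_H)$, so they avoid all other points of $P \cap V_H$. Since $u,v$ were arbitrary, every pair of points in $P \cap V_H$ is mutually visible in $H$, establishing that $P \cap V_H$ is a mutual-visibility set of $H$.

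There is no real obstacle here; the argument is essentially a one-line application of the definition of convex subgraph, which guarantees both that distances are preserved and that geodesics do not escape $H$. The only thing to be slightly careful about is the degenerate case where $|P \cap V_H| \le 1$, which is trivially a mutual-visibility set, and the case $u = v$, which is vacuous.
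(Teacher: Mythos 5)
Your argument is correct and follows the same route as the paper's proof: both take a witnessing shortest path in $G$ avoiding other points of $P$, use the definition of convex subgraph to conclude it lies in $H$ and remains a shortest path there, and hence that its internal vertices avoid $P\cap V_H$. Your explicit remark that $d_H(u,v)=d_G(u,v)$ is a small extra detail left implicit in the paper, but the approach is identical.
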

\begin{proof}
 Let $u,v$ be two not necessarily distinct vertices of $G$ in $P'=P\cap V_H$, then, by definition of convex subgraph, all the shortest $(u,v)$-paths  in $G$ are in $H$ and one of them is without points in $P$ and then in $P'$. Hence  $u,v$ are mutually visible in $H$. By the generality of $u,v$, $P'$ is a \mvs of $H$.
\end{proof}


Given a graph $G$ and a positive integer $k$, the property $\mu(G)\leq k$ is not a hereditary property for induced subgraphs, i.e., it is possible for an induced subgraph $H$ of $G$ that $\mu(H)>k\geq \mu(G)$. 
Consider the grid graph $\Gamma_{2,7}\sim P_2 \sq P_7$ in Figure~\ref{fig:noconvex}, where  $P_2=(u_0,u_1)$ and $P_7=(v_0,v_1,\ldots,v_6)$. Then, as we will prove in Section~\ref{sec:grid}, $\mu(G)=4$. The induced subgraph $H$ obtained by removing vertices $(u_0,v_0), (u_2,v_0), (u_4,v_0)$, and $(u_6,v_0)$ from $G$ is a tree with five leaves, then, as shown in Figure~\ref{fig:noconvex} and proved in Section~\ref{sec:block}, $\mu(H)=5$. So $\mu(H)>4=\mu(G)$.

However, if we consider convex subgraphs of $G$ the property holds, as stated by the following lemma.   

\begin{lemma}\label{lem:mu_conv}
 Let $H$ be a convex subgraph of a graph $G$. Then  $\mu(H)\leq\mu(G)$.
\end{lemma}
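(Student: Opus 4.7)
The plan is to take a largest mutual-visibility set of $H$ and argue that, because $H$ is convex in $G$, it is still a mutual-visibility set when viewed inside $G$. This immediately yields $\mu(G) \geq \mu(H)$.

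Concretely, let $P \subseteq V_H$ be a \mvs of $H$ with $|P|=\mu(H)$, and pick any two points $u,v \in P$. Since $H$ is a convex subgraph of $G$, every shortest $(u,v)$-path of $G$ lies entirely in $H$; in particular $d_G(u,v)=d_H(u,v)$, and the shortest $(u,v)$-paths in $H$ coincide with the shortest $(u,v)$-paths in $G$. By the assumption that $P$ is a \mvs of $H$, there exists a shortest $(u,v)$-path in $H$ whose internal vertices avoid $P$; by the previous observation, that same path is a shortest $(u,v)$-path in $G$, and since $P \subseteq V_H$, no other point of $P$ can appear on it as an internal vertex. Thus $u$ and $v$ are mutually visible in $G$.

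By the arbitrariness of $u,v \in P$, the set $P$ is a \mvs of $G$, hence $\mu(G) \geq |P| = \mu(H)$, as claimed.

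The argument is essentially a direct invocation of convexity, so there is no serious obstacle; the one point to keep straight is that convexity yields both the distance equality $d_G(u,v)=d_H(u,v)$ and the containment of shortest-path vertex sets inside $V_H$, which together are exactly what is needed to transport the witness path from $H$ to $G$ without introducing new blocking points of $P$.
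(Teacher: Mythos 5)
Your proposal is correct and follows the same route as the paper: a mutual-visibility set of $H$ remains one in $G$ because convexity makes the shortest paths in $H$ between its points coincide with shortest paths in $G$, so the witness paths transfer. You simply spell out the distance equality and path containment that the paper's one-line proof leaves implicit.
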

\begin{proof}
 Any \mvs $P$ of $H$ is also a \mvs of $G$, since all the shortest paths between points in $P$ are both in $G$ and in $H$. Then the statement follows.
\end{proof}
 The next two lemmas sets upper bounds to the \mvn of a graph: The following one is based on the {\mvn}s  of certain convex subgraphs.

\begin{lemma}\label{lem:mu_bound}
 Let $G=(V,E)$ be a graph and let $V_1, V_2, \ldots V_k$ be subsets of $V$ such that $\bigcup_{i=1}^k V_i = V$. Then  $\mu(G)\leq\sum_{i=1}^k\mu(\conv{V_i})$.
\end{lemma}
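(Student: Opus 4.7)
The plan is to take a largest mutual-visibility set $P$ of $G$, of size $\mu(G)$, and bound $|P|$ by distributing its elements across the convex hulls $\conv{V_i}$. First I would invoke Lemma~\ref{lem:mv_subset} on each convex subgraph $\conv{V_i}$: since $P$ is a \mvs of $G$ and $\conv{V_i}$ is convex, the restriction $P\cap V(\conv{V_i})$ is a \mvs of $\conv{V_i}$, and therefore satisfies $|P\cap V(\conv{V_i})|\le\mu(\conv{V_i})$.

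Next I would use the covering hypothesis. Since every $V_i$ is contained in the vertex set of its own convex hull, the assumption $\bigcup_{i=1}^k V_i=V$ upgrades to $\bigcup_{i=1}^k V(\conv{V_i})=V$, so each vertex of $P$ lies in at least one of the sets $V(\conv{V_i})$. Summing the bounds from the previous step over $i$ then yields
$$\mu(G)=|P|\le\sum_{i=1}^{k}|P\cap V(\conv{V_i})|\le\sum_{i=1}^{k}\mu(\conv{V_i}),$$
where the first inequality is a simple union bound (overcounting when the hulls share points of $P$ only strengthens it).

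I do not foresee any real obstacle: the statement is essentially a subadditivity/covering fact, and the only nontrivial ingredient — that intersecting a \mvs with a convex subgraph yields a \mvs of that subgraph — has already been packaged as Lemma~\ref{lem:mv_subset}. The only minor point to verify is the trivial containment $V_i\subseteq V(\conv{V_i})$, which is immediate from the definition of the convex hull and is what lets the cover transfer from $\{V_i\}$ to $\{V(\conv{V_i})\}$.
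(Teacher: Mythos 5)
Your argument is correct and is essentially the paper's own proof: the paper also takes a maximum \mvs $P$, uses Lemma~\ref{lem:mv_subset} to see that each $P\cap V(\conv{V_i})$ is a \mvs of $\conv{V_i}$, and derives the same counting bound from the covering hypothesis, merely phrasing it as a contradiction instead of your direct summation. No gap to report.
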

\begin{proof}
Assume $\mu(G) > \sum_{i=1}^k\mu(\conv{V_i})$ and let $P\subseteq V$ be a \mvs such that $|P|=\mu(G)$.  Since $\bigcup_{i=1}^k V_i = V$ any point of $P$ is in at least one  $\conv{V_i}$. Let $P_i$ be the set of vertices that are in $P$ and in $\conv{V_i}$, for each $i=1,2,\ldots,k$. Then $\sum_{i=1}^k |P_i|\geq |P|=\mu(G)> \sum_{i=1}^k\mu(\conv{V_i})$. Hence there exists at least a set $P_j$ such that $|P_j|> \mu(\conv{V_j})$, for some $j$ in $\{1,2,\ldots,k\}$. This is a contradiction since, by Lemma~\ref{lem:mv_subset}, $P_j$ is a \mvs of $\conv{V_j}$ and its size cannot be larger than $\mu(\conv{V_j})$.
\end{proof}

\begin{lemma}\label{lem:upper}
Let $G=(V,E)$ be a graph with $n$ vertices and diameter $d$. Let $c$ the number of vertices of a smallest cycle in $G$, if any. Then
$$\mu(G)\leq \min\{n-d+1, n-c+3\}.$$
\end{lemma}

\begin{proof}
Assume $\mu(G)>n-d+1$ and let $(x_0, x_1,\ldots, x_d)$ be
a diameteral path in $G$. Then this path contains at least three points of any maximum \mvs $P$ of $G$.  Let $i\geq 0$ be the minimum index such that $x_i\in P$ and let $k\leq d$ the maximum index such that $x_k\in P$. Since $x_i$ and $x_k$ must be in mutual visibility there
must exist a shortest path $(x_i=v_0, v_1, \ldots, v_{k-i}=x_k)$ such that vertices $v_1,v_2,\ldots,v_{k-i-1}$ are not in $P$. Then the path
$(x_0,x_1, \ldots, x_i, v_1, \ldots,v_{k-i-1}, x_k, x_{k+1},\ldots, x_d)$ is a diameteral path in $G$ with only two points in $P$. Hence $\mu(G)\leq n-d+1$.

Similarly, assume $\mu(G)>n-c+3$, and let $C_c=(x_0, x_1,\ldots, x_{c-1})$ be a smallest cycle in $G$ with $c$ vertices. Then $C_c$ has at least four points $x_{i_1},x_{i_2},x_{i_3},x_{i_4}$, $i_1<i_2<i_3<i_4$, of any maximum \mvs $P$ of $G$. Since $x_{i_1}$ and $x_{i_3} $ must be in mutual visibility, there
must exist a shortest $(x_{i_1}$,$x_{i_3})$-path without further points in $P$. The cycle given by this path and one of the  $(x_{i_1}$,$x_{i_3})$-paths in $C_c$ has $c$ vertices and at least one point in $P$ less than $C_c$. By repeating the argument, a cycle with $c$ vertices and only three points in $P$ can be found and then $\mu(G)\leq n-c+3$.
\end{proof}

It is worth to notice that for each graph $G$ there exists a \mvs  $P$ such that $|P|=\mu(G)$ and no articulation vertex is in $P$, as shown below.

\begin{lemma}\label{lem:art}
 Let $G=(V,E)$ be  a graph and let $X$ be the set of its articulation vertices. There exists a maximum \mvs $P\in M(G)$ such that $X\cap P=\emptyset$.
\end{lemma}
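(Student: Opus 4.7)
The plan is to start from an arbitrary maximum \mvs $P\in M(G)$ and iteratively replace articulation vertices inside $P$ by non-articulation ones, keeping $|P|$ and the \mvs property intact; since each replacement strictly decreases $|P\cap X|$, after finitely many steps the intersection becomes empty and the proof is complete.

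The core is therefore a single swap. Suppose $v\in P\cap X$ and let $C_1,\ldots,C_m$ ($m\geq 2$) be the components of $G-v$. First I would observe, by contradiction, that $P\setminus\{v\}$ lies entirely in a single component, say $C_1$: any shortest path between points belonging to distinct components of $G-v$ is forced through $v\in P$, contradicting the mutual visibility guaranteed by $P$. Next, I would pick another component $C_j$ and choose $w\in C_j$ to be a non-articulation vertex of $G$. The existence of such a $w$ follows from the classical fact that every connected graph on at least two vertices has at least two non-articulation vertices; applied to the connected subgraph $G[C_j\cup\{v\}]$, it produces a non-articulation vertex of that subgraph different from $v$, and a short argument shows that a vertex of $C_j$ disconnects $G$ if and only if it disconnects $G[C_j\cup\{v\}]$ (removing such a vertex cannot separate $v$ from the other components).

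Having fixed $w$, I would set $P'=(P\setminus\{v\})\cup\{w\}$ and verify it is still a \mvs by checking two cases. For two points of $P\setminus\{v\}$, any shortest path between them in $G$ must remain inside $G[C_1\cup\{v\}]$ (otherwise it would revisit $v$ and fail to be simple), so a $P$-clean witness path is automatically free of $w\in C_j$ and hence $P'$-clean. For a point $u\in P\setminus\{v\}$ and the new point $w$, I would use the equality $d_G(u,w)=d_G(u,v)+d_G(v,w)$ to concatenate a $P$-clean shortest $u$-$v$ path (which exists because $u,v\in P$) with any shortest $v$-$w$ path; the only new internal vertex that appears, namely $v$, is no longer in $P'$, and the internal vertices of the two segments sit in $C_1$ and $C_j$ respectively, so they miss $P'$.

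I expect the main obstacle to be the non-articulation choice of $w$: once one commits to picking $w$ in a component of $G-v$ different from the one hosting $P\setminus\{v\}$, the mutual-visibility check is essentially mechanical, but arguing that $w$ can be taken non-articulation in $G$ — and not merely in the induced subgraph $G[C_j\cup\{v\}]$ — is the one non-routine step. Iterating this swap and inducting on $|P\cap X|$ then delivers the statement.
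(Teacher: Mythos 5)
Your proposal is correct and rests on the same core idea as the paper's proof: every shortest path between vertices lying in different components of $G-v$ must pass through $v\in P$, hence $P\setminus\{v\}$ sits in a single component, and $v$ can be swapped for a vertex of another (new) component without losing maximality. Where you go further is in making the swap argument actually close: the paper performs a single exchange of $v$ for an \emph{arbitrary} vertex $x'$ of another component and stops, which removes $v$ but may leave other articulation points of $P$ untouched, and $x'$ could itself be an articulation vertex (for the path $(a,x,y,b)$ the maximum \mvs $\{x,y\}$ consists of two articulation vertices, so one such swap cannot finish the job); your insistence that the replacement $w$ be a non-articulation vertex of $G$ --- obtained from the classical two-non-cut-vertices fact applied to $G[C_j\cup\{v\}]$, together with your correct observation that a vertex of $C_j$ is a cut vertex of $G$ exactly when it is one of that subgraph --- makes $|P\cap X|$ strictly decrease, so your induction genuinely terminates, and your explicit check via $d_G(u,w)=d_G(u,v)+d_G(v,w)$ with the two segments internally contained in $C_1$ and $C_j$ spells out the visibility verification that the paper only asserts. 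The single point to tighten is the choice of $C_j$: when $G$ is disconnected you must take $C_j$ among the \emph{new} components created by deleting $v$ (those adjacent to $v$), as in the paper's wording, so that $w$ is connected to the points in $C_1$; with that reading your argument is complete.
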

\begin{proof}
Let $P$ be any \mvs in $M(G)$ and suppose, by contradiction, that there exists a point $x_P\in X\cap P$.
 Let $(V_1,E_1),$ $(V_2,E_2),\ldots, (V_k,E_k), k\geq 2$ be the new connected components of $G-x_P$, created by removing $x_P$. Note that $P\setminus \{x_p\}\subseteq \bigcup_{\ell=1}^k V_\ell$. However, there is only one index $i\in\{1,\ldots,k\}$ such that $P\cap V_i\not = \emptyset$, otherwise there would be two points $u,v$ belonging to two different connected components in $G-x_P$ that are in mutual visibility in $G$. This is impossible since any shortest $(u,v)$-path passes through $x_P$. Then $P'=(P\setminus \{x_P\})\cup \{x'\}$, where $x'\in V_j, j\not=i$, is such that $P'\in M(G)$.
\end{proof}

Before calculating {\mvn}s and maximum {\mvs}s for some graph classes, let us show a first result that compares the \mvn of a graph $G$ with two invariants of $G$.

\begin{lemma}\label{lem:gpDelta}
 Given a graph $G$ with general position number $\gp{G}$ and maximum degree $\Delta(G)$ then $\mu(G)\geq \gp{G}$ and $\mu(G)\geq \Delta(G)$.
\end{lemma}
\begin{proof}
 All vertices of a largest a largest general postion set $S$ of $G$ form a \mvs.  Then $\gp{G}=|S|\leq \mu(G)$.
 
 Let $v$ be a vertex of $G$ with degree $\Delta(G)$, then consider the set $P=N_G(v)$. For any two vertices $x,y\in N_G(v)$ they are adjacent or at distance two in the path $(x,v,y)$. Then, since $v$ is not in $P$, in both cases they are in mutual visibility and then $P$ is a \mvs.
\end{proof}

\begin{remark}~\label{rem:homega}
 For a graph $G$, in~\cite{chand:16} it has been proved that $\gp{G}\geq h(G)$ and $\gp{G}\geq \omega(G)$. Then $h(G)$ and $ \omega(G)$ are also lower bounds for $\mu(G)$.
\end{remark}

The following lemma gives a first taste of the \mvn in two basic graph classes, that will be useful to derive further results.

\begin{lemma}\label{lem:PnCn}
The \mvn of a path graph $P_n$, $n\geq2$, is $\mu(P_n)=2$ and the \mvn of a cycle graph $C_n$, $n\geq 3$, is $\mu(C_n)=3$.
\end{lemma}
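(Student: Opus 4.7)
The plan is to handle the two families separately, each with a matching lower bound (by exhibiting a set) and upper bound (by showing no larger set can exist).

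For $P_n$ with $n\ge 2$, the lower bound $\mu(P_n)\ge 2$ is trivial: any two distinct vertices form a \mvs, since a single pair is pairwise mutually visible (no ``further points'' can appear in a two-element set). For the upper bound I would use the basic structural fact about $P_n$: for any two vertices $v_i,v_j$ with $i<j$, the shortest $(v_i,v_j)$-path is \emph{unique} and consists precisely of $v_i,v_{i+1},\ldots,v_j$. Hence if $P$ contains three vertices $v_i,v_j,v_k$ with $i<j<k$, the unique shortest $(v_i,v_k)$-path passes through $v_j\in P$, so $v_i$ and $v_k$ are not mutually visible. Thus $\mu(P_n)\le 2$.

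For $C_n$ with $n\ge 3$, for the lower bound I would just pick three vertices $a,b,c$ and observe that the cycle splits into three arcs $A_{ab},A_{bc},A_{ca}$. Given any pair, say $a,b$, the two $(a,b)$-paths in $C_n$ are the arc $A_{ab}$ (containing neither $c$ nor any other chosen point) and its complement (which contains $c$); the former has length at most the latter, so it is a shortest path avoiding the remaining chosen point. Hence $\{a,b,c\}$ is a \mvs and $\mu(C_n)\ge 3$. For the upper bound, suppose $P=\{a,b,c,d\}$ were a \mvs, labelled in cyclic order along $C_n$. Consider the opposite pair $a,c$. In $C_n$ there are exactly two $(a,c)$-paths, namely the two arcs joining them; one contains $b$ as an interior vertex and the other contains $d$. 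Therefore every shortest $(a,c)$-path contains a point of $P\setminus\{a,c\}$, contradicting mutual visibility of $a$ and $c$.

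The only genuine subtlety is in the last step: one must include the case where both arcs from $a$ to $c$ have equal length, so there are two shortest paths rather than one; the argument still goes through because each of the two paths independently contains a point of $P$. Everything else reduces to unpacking the definitions of shortest path and of \mvs.
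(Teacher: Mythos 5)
Your treatment of $P_n$ and your upper-bound argument for $C_n$ are correct and essentially identical to the paper's. The gap is in the lower bound for $C_n$: you claim that for an \emph{arbitrary} triple $a,b,c$ the arc $A_{ab}$ avoiding $c$ ``has length at most'' the complementary arc through $c$. That is false in general, and with it the implicit claim that any three vertices of a cycle form a \mvs. Take three consecutive vertices $x_0,x_1,x_2$ in $C_n$ with $n\geq 5$: the arc from $x_0$ to $x_2$ avoiding $x_1$ has length $n-2>2$, so the unique shortest $(x_0,x_2)$-path is $(x_0,x_1,x_2)$, which passes through the point $x_1$; hence $\{x_0,x_1,x_2\}$ is not a \mvs. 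In general $\{a,b,c\}$ is a \mvs of $C_n$ precisely when each of the three arcs into which they cut the cycle has length at most $\lfloor n/2\rfloor$, so the lower bound requires a specific choice of points, not an arbitrary one.

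The repair is small: pick a triple satisfying that arc condition, e.g.\ the paper's choice $x_0$, $x_{\lceil n/2\rceil-1}$, $x_{\lceil n/2\rceil}$, whose arcs have lengths $\lceil n/2\rceil-1$, $1$ and $\lfloor n/2\rfloor$, all at most $n/2$ (any roughly equally spaced triple also works). With that substitution your proof is complete and coincides with the paper's argument.
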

\begin{proof}
Since $n\geq 2$ there is an edge $e$ in $P_n$ and the two endpoints of $e$ are mutually visible, so $\mu(P_n)\geq2$. By Lemma~\ref{lem:upper}
$\mu(P_n)\leq2$, since the diameter of $P_n$ is equal to $n-1$. Then $\mu(P_n)=2$
 
 Regarding the cycle graph $C_n=(x_0, x_1\ldots, x_{n-1})$, $\mu(C_n)\geq 3$ since it is always possible to choice  $x_0$, $x_{\lceil \frac n 2\rceil-1}$ and $x_{\lceil \frac n 2\rceil}$ as three points in mutual visibility. By Lemma~\ref{lem:upper}
$\mu(C_n)\leq3$,  Then $\mu(C_n)=3$
\end{proof}

It is interesting to note that $\gp{P_n}=\mu(P_n)=2$ for $n\geq 2$, and $\gp{C_n}=\mu(C_n)=3$ for $n=3$ and $n\geq5$ (see~\cite{manuel:18}). For $n=4$ we have $\gp{C_4}\not =\mu(C_4)$ since $\gp{C_4}=2$ and $\mu(C_4)=3$. Indeed, $C_4$ is the smallest connected graph for which the general position number and the \mvn are different. As we will see in Subsection~\ref{sec:grid}, this difference can be arbitrarily large.


\section{Computational complexity}\label{sec:complexity}
To study the computational complexity of finding a maximum \mvs in a graph, we introduce the following decision problem.
\begin{definition}
\mvp problem: \\
{\sc Instance}: A graph $G=(V,E)$, a positive integer $K\leq |V|$. \\
{\sc Question}: Is there a \mvs $P$ of $G$ such that $|P|\geq K$?
\end{definition}
The problem is hard to solve as shown by the next theorem.
\begin{figure}[t]
   \graphicspath{{fig/}}
   \centering
   \def\svgwidth{\columnwidth}
   \large\scalebox{0.8}{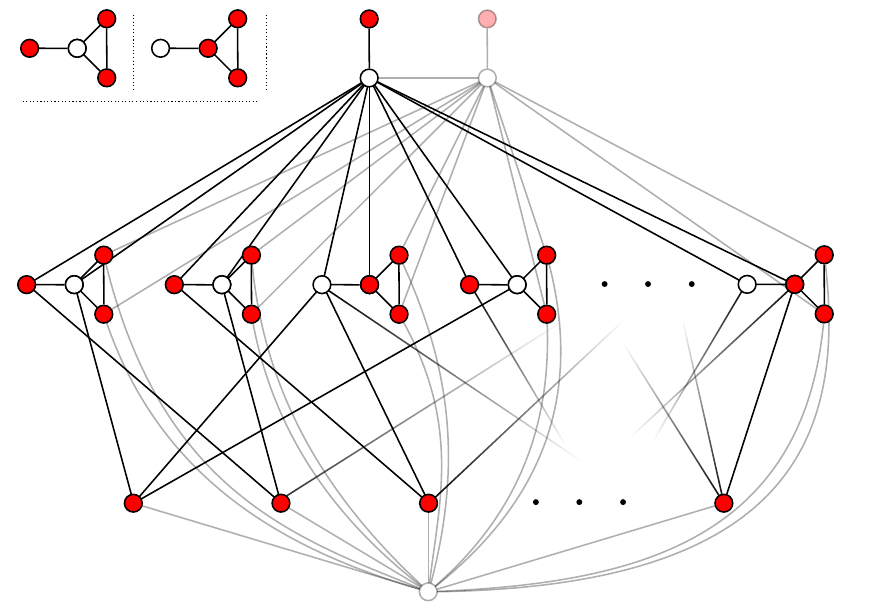}
 \caption{The graph used in Theorem~\ref{theo:NP}. Red vertices are points. Most visible vertices and edges represent the main part of the graphs. The rest is added to ensure the mutual visibility among points. Top left: The true-setting gadget used to represent a variable $x_i$ with two maximum {\mvs}s representing the two possible truth assignments: $x_i$ is false if and only if $u_i$ is a point.}
\label{fig:NP}
\end{figure} 

\begin{theorem}\label{theo:NP}
 \mvp  is NP-complete.   
\end{theorem}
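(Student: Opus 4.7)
The plan is to establish both membership in NP and NP-hardness.

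For NP membership, I would guess a candidate set $P\subseteq V$ with $|P|\geq K$ and verify in polynomial time that it is a \mvs. For each unordered pair $\{u,v\}\subseteq P$ it suffices to compute $d_G(u,v)$ and the distance $d_{G-(P\setminus\{u,v\})}(u,v)$ via two BFS calls: the two values coincide if and only if some shortest $(u,v)$-path in $G$ avoids the remaining points of $P$, which is exactly the mutual visibility condition. Hence the check runs in time $O(|P|^2\cdot(|V|+|E|))$.

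For NP-hardness I would reduce from \sat. Given an instance $\phi$ with $p$ variables $x_1,\dots,x_p$ and $q$ clauses $C_1,\dots,C_q$, I would build the graph $G$ sketched in Figure~\ref{fig:NP}. Its vertices come in three families: for each variable $x_i$ a true-setting gadget on $\{u_i,\bar u_i,s_i,t_i\}$; for each clause $C_j$ a clause vertex $v_j$ adjacent to the three literal vertices occurring in $C_j$; and a small set of connector vertices $w,y,z,y',z'$ joining all gadgets so that the intended global shortest-path structure is realised. The variable gadget would be designed so that both $s_i$ and $t_i$ lie in every maximum \mvs, while at most one of $u_i,\bar u_i$ can be selected; the resulting two optima encode the two truth values of $x_i$, with $x_i$ false iff $u_i$ is picked.

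Setting the threshold to $K=3p+q$, the forward direction would turn a satisfying assignment $\tau$ into the set containing every $s_i$ and $t_i$, the literal vertex true under $\tau$ in each gadget, and every clause vertex $v_j$; I would verify pairwise mutual visibility by exhibiting, through the connector vertices, a shortest path between each pair that avoids all other selected points. Conversely, any \mvs of size at least $K$ must absorb the full budget of three vertices per variable gadget (hence contain $s_i$, $t_i$ and exactly one literal) and all $q$ clause vertices; reading off the selected literals yields an assignment $\tau$, and the forced mutual visibility between each $v_j$ and the literals of other gadgets will imply that at least one literal of $C_j$ is itself selected, i.e.\ $\tau$ satisfies $C_j$.

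The hard part will be calibrating the connector subgraph so that all the required shortest-path properties hold simultaneously: (a) $u_i$ and $\bar u_i$ are never mutually visible in $G$, so at most one can be chosen; (b) for any two selected points in different gadgets, the connectors provide a shortest path that is internally disjoint from the rest of the selection; and (c) a clause vertex $v_j$ can be made mutually visible with a literal vertex of a different variable only when at least one literal neighbour of $v_j$ is itself selected, which is what makes the threshold $K=3p+q$ encode clause satisfaction rather than a mere counting constraint. Each of these properties reduces to a finite case analysis on the short distances inside the construction, but getting them all to hold at once is the technical core of the reduction.
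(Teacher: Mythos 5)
Your NP-membership argument is fine and matches the paper's (the paper does it slightly more efficiently, with $O(|P|(|V|+|E|))$ via Algorithm \mva, but your pairwise-BFS check is correct). The hardness part, however, has genuine gaps, and the mechanisms you invoke would not work as stated. First, your property (a) is false for the gadget you describe: in the true-setting gadget $u_i$ and $\bar u_i$ are adjacent, so they are \emph{always} mutually visible; no gadget can exclude one of them by ``non-visibility''. More fundamentally, your property (c) is backwards: mutual visibility of $v_j$ with a far-away point can only be witnessed by a shortest path whose internal vertices are \emph{not} points, so selecting a literal neighbour of $v_j$ can never be what \emph{enables} visibility --- it can only obstruct it. The correct mechanism (the one the paper uses) is the opposite: $v_j$ sees the distant sentinel $y'$ only through a path $(v_j,u,y,y')$ with $u\in N_G(v_j)\cap U$ \emph{not} selected, and the encoding ``$u_i$ is a point iff $x_i$ is False'' makes ``some neighbour unselected'' mean ``some literal of $c_j$ is True''. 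Your forward direction also contradicts your own convention: you say $x_i$ is false iff $u_i$ is picked, but then put into $P$ ``the literal vertex true under $\tau$'', which, combined with the correct visibility mechanism, would make \emph{unsatisfied} clauses easier to accommodate and break the reduction.

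Second, the threshold $K=3p+q$ (omitting the two sentinels $y',z'$ from the budget) loses exactly the tightness that makes the converse direction work. The paper proves $\mu(G)\le 3p+q+2$ by covering $V$ with convex subgraphs --- the gadgets $T_i$ (each with $\mu(T_i)=3$), the path $(y',y,z,z')$, singletons for clause vertices, and a convex star on $w,v_1,v_2,v_3$ obtained by first normalizing the \sat instance to contain three pairwise variable-disjoint clauses --- and then applies Lemma~\ref{lem:mu_bound} together with Lemma~\ref{lem:art} (to discard the cut vertices $y,z$) so that a set of size $K=3p+q+2$ is forced, up to one spare vertex, to contain all $v_j$, both $y'$ and $z'$, and exactly one of $u_i,\bar u_i$ per gadget; only then does visibility of each $v_j$ with $y'$ force clause satisfaction. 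With your slack of $2$, a set of size $3p+q$ could simply drop some clause vertices (or $y'$) and the forcing argument evaporates, so unsatisfiable instances may still admit sets of size $K$. In short, what you label ``the technical core'' --- calibrating the connectors and the budget so that everything is forced --- is precisely the content of the paper's proof (the convex-hull bound, the articulation-vertex lemma, and the disjoint-clauses normalization), and it is missing from your proposal; the parts you do specify are either the paper's construction restated or, in points (a) and (c) and the choice of $K$, incorrect.
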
  

\begin{proof} 
 Given  a set of points $P\subseteq V$ of $G$, we can test in polynomial time whether $P$ is a \mvs or not (see also Algorithm \mva). Consequently, the problem is in NP. 

 We will now prove that the \sat problem, shown as NP-complete in~\cite{Karp72}, polynomially reduces to \mvp. 
 \begin{quote}  
 A \sat instance $\Phi$ is defined as a set $X=\{x_1,x_2,\ldots,x_p\}$ of $p$ boolean variables and a set $C$ of $q$ clauses, each  defined as a set of three literals: every variable $x_i$ corresponds to two literals $x_i$ (the positive form) and $\bar {x_i}$ (the negative form). To simplify the notations we will denote by $\{\ell_1, \ell_2, \ell_3\}$ the clause with literals $\ell_i, i=1,2,3$, without distinction between the orders in which they are listed. A truth assignment assigns a Boolean value ($True$ or $False$) to each variable, corresponding to a truth assignment of opposite values for the two literals $x_i$ and $\bar {x_i}$: $\bar {x_i}$ is $True$ if and only if $x_i$ is $False$. 
 A clause is satisfied if at least one of its literals is satisfied. The \sat problem asks whether there is a truth assignment satisfying all clauses.
\end{quote}
In what follows,  we assume there are at least three clauses such that their (pairwise) intersection is empty. Any instance $\Phi$ that does not satisfy this constraint can be transformed into an instance $\Phi'$ with such three clauses by adding five new variables $a,b,c,d,e$ and the required three clauses $\{a, \bar a, b\}$, $\{\bar b, c, \bar c\}$, $\{d,\bar d, e\}$ that are  always satisfied for each truth assignment. Then the \sat instance $\Phi$ has a Yes answer if and only if $\Phi'$ has a Yes answer.

We transform \sat to \mvp. Let $X = \{x_1,x_2,\ldots,x_p\}$
and $C = \{c_1,c_2,\ldots,c_q\}$ be any instance of \sat. We must construct a
graph $G = (V,E)$ and a positive integer $K \leq |V|$ such that $G$ has a \mvs  of size $K$ or more if and only if $C$ is satisfiable.

For each variable $x_i\in X$, there is a true-setting convex subgraph of $G$ $T_i=(V_i,E_i)$, with $V_i=\{u_i,\bar {u_i}, s_i, t_i\}$ and $E_i=\{u_i\bar {u_i},\bar{u_i}s_i,\bar{u_i}s_i, s_{i}t_i\}$. See the top left part of Figure~\ref{fig:NP} for a drawing of $T_i$ and the two possible maximum {\mvs}s.  Notice that each of the two maximum {\mvs}s of $T_i$ contains either $u_i$ or $\bar {u_i}$.

For each clause $c_j \in C$, there is a vertex $v_j$ and, for each literal $x_i$ (or $\bar{x_i}$) in $c_j$ there is in an edge $v_ju_i$ (or an edge $v_j \bar{u_i}$, respectively). Moreover, there is a vertex $w$ and edges ${v_j}w$ for each $j=1,2,\ldots, q$.

There are four more vertices in $V$, that is $y,y',z,z'$. For each $i\in\{1,2,\ldots, p\}$ there are edges ${u_i}y$, $\bar{u_i}y$, ${s_i}z$, ${t_i}z$, ${s_i}w$, ${t_i}w$. Finally, $E$ contains edges $yz$, $yy'$ and $zz'$. 

A representation of $G$ is given in Figure~\ref{fig:NP}.

The construction of our instance of \mvp is completed by setting $K= 3p+q+2$.
It is easy to see how the construction can be accomplished in polynomial time. All that remains to be shown is that $C$ is satisfiable if and only if
$G$ has a \mvs of size $K$ or more.

First, suppose that $t: X\rightarrow \{True,False\}$ is a satisfying truth assignment for $C$. The corresponding set of points $P$ includes vertices $u_i$ if $t(x_i)$ is $False$, and  $\bar{u_i}$ otherwise, for each $i\in\{1,2,\ldots, p\}$. Moreover $y'$, $z'$, $v_j$, $s_i$, $t_i$ are in $P$, for each possible value of $i$ and $j$. No further vertex is in $P$. Then $|P| =  3p+q+2=K$. It remains to show that $P$ is a \mvs. Clearly, $y'$ is in mutual visibility with $z'$. Let $ST=\left\{s_i, t_i~|~i\in\{1,2,\ldots, p\}\right\}$,  $U=\left\{u_i, \bar{u_i}~|~i\in\{1,2,\ldots, p\}\right\}$, $D=\left\{v_j~|~j\in\{1,2,\ldots, q\}\right\}$. Each vertex in $ST$ is in mutual visibility with all the points in its true-setting subgraph and with all the other points in $P$ thanks to shortest paths passing through vertices $w,y$, and $z$ that are not in $P$ (e.g., for $t_i\not \in T_1$, the paths $(t_i,z,s_1), (t_i,z,t_1), (t_i,z,z'), (t_i,z,y,y'), (t_i,z,y,u_1), (t_i,z,y,\bar{u_1}), (t_i,w,v_1)$).  

All the points in $D$ are in mutual visibility through shortest paths of length two via vertex $w$. More interesting is to show that each point $v\in D$ is in mutual visibility with $y'$ (and with $z'$). Point $v$ corresponds to a clause $c\in C$ and, since $C$ is satisfiable, there is a vertex  $u$ in $N_G(v)\cap U$ that is not in $P$ corresponding to a $True$ literal in $c$.
Then the shortest paths $(v,u,y,y')$ and $(v,u,y,z,z')$ show that $v$ is in mutual visibility with $y'$ and $z'$. Finally, each point in $U$ is in mutual visibility with points $y'$, $z'$, and all the other points in $U$, because of shortest paths passing through $y$ and $z$.
Regarding the mutual visibility of points in $U$ with points in $D$, let $v_j$ be a point in $D$ corresponding to a clause $c_j$ and let $x_i$ (or $\bar{x_i})$ be a literal in $c_j$ corresponding to vertex $u_i$ (or $\bar{u_i}$). If $t(x_i)$ is True then either the point $\bar{u_i}$ is connected to $v_j$ with the path $(\bar{u_i},u_i,v_j)$ or $\bar{u_i}$ is adjacent to $v_j$. Otherwise, if $t(x_i)$ is False then either the point $\bar{u_i}$ is adjacent to $v_j$ or connected to $v_j$ via $(u_i,\bar{u_i},v_j)$. Similarly for all the literals in $c_j$. If $x_i$ is not in $c_j$ then  point $u_i$ (or $\bar{u_i}$) is in mutual visibility with $v_j\in D$ thanks to a shortest path $(u_i,y,u',v_j)$ (or $(\bar{u_i},y,u',v_j)$), where the vertex $u'\in D$ is in correspondence with a $True$ literal in $c_j$. This concludes the first part of the proof.

Conversely, let us suppose that there is a set $P\subseteq V$ of points such that $|P|\geq K=3p+q+2$. In $C$ there are three clauses that do not share any variable. Assume, without loss of generality, that these three clauses are, $c_1$, $c_2$, $c_3$. Then the star subgraph $H$ of $G$ induced by vertices $v_1$, $v_2$, $v_3$ and $w$ is a convex subgraph of $G$. Convex subgraph of $G$ are $T_i$, the path graph $H'=(y',y,z,z')$ and each subgraph $L_j\sim K_1$ consisting in a single vertex $v_j$, $j=4,\ldots q$. The union of the vertices of these convex subgraphs is $V$ then, by applying Lemmas~\ref{lem:mu_bound}, we have:
$$\mu(G) \leq \mu(H)+\mu(H')+\sum_{i=1}^p \mu(T_i)+\sum_{j=4}^q \mu(L_j) = 3+2+3p+(q-3)=3p+q+2$$

The above inequality holds since it is not difficult to see that $\mu(H)=3$ (see also Corollary~\ref{cor:tree}), $\mu(H')=2$ by Lemma~\ref{lem:PnCn}, and, by enumeration, that $\mu(T_i)=3$. The \mvn $\mu(G)$ is the size of a largest \mvs in G, then $|P|= K=3p+q+2$. Since $y$ and $z$ are articulation vertices we can assume, by Lemma~\ref{lem:art}, they are not in $P$.

Moreover, at least one vertex for each $T_i$ is not in $P$: call $Q$ the set of such vertices. Then the points in $P$ are a subset of $V'=V\setminus (Q \cup\{y,z\})$. Since $|V|=4p+q+5$ and $|Q|\geq p$, then $|V'|\leq 3p + q+3$. Hence at most one vertex in $V'$ is not in $P$. Consequently, at least two vertices among $v_1$, $v_2$, $v_3$ of $H$ are in $P$ (say $v_1$, $v_2$), and since $H$ is a convex subgraph of $G$ then the only shortest path between $v_1$ and $v_2$ is $(v_1,w,v_2)$. This implies that $w$ is not in $P$, otherwise $v_1$ and $v_2$ are not in mutual visibility. 

In conclusion, all the vertices in $D$ are in $P$, $y'$ and $z'$ are in $P$ and three vertices for each $T_i$ are in $P$, and in particular exactly one vertex among $u_i$ and $\bar{u_i}$ is in $P$. Now, consider a point $v_j$ in $D$ and its corresponding clause $c_j$. Since $v_j$ and $y'$ are mutually visible, at least one vertex in $N_G(v_j) \cap U$ is not in $P$ an then, the corresponding literal is $True$ and $c_j$ is satisfied. By the generality of $c_j$ all the clauses are satisfied. 
\end{proof}
 
Theorem~\ref{theo:NP} shows that \mvp is hard, however the following problem, which asks to test if a given set of points is a \mvs, can be solved in polynomial time.

\begin{definition}
\mvt: \\
{\sc Instance}: A graph $G=(V,E)$ and $P\subseteq V$. \\
{\sc Question}: Is $P$ a  \mvs of $G$?
\end{definition}

The solution is provided by means of Algorithm \mva that in turn uses Procedure \bfs as a sub-routine.  Procedure \bfs and Algorithm \mva are shown in Figures~\ref{alg:bfs} and~\ref{alg:algoMV}, respectively.

\begin{algorithm}[ht]
\SetKwInput{Proc}{Procedure}
\Proc{\bfs}
\SetKwInOut{Input}{Input}
\Input{A connected graph $G=(V,E)$, a set of points $P$, $v\in V$, a boolean $t$ }
\SetKwInOut{Output}{Output}
\Output{The distance vector of $v$ from any vertex $u$ in $P$ calculated in $G$, if $t$ is True, otherwise calculated in $G- P\setminus\{u,v\}$.
}
\BlankLine
\BlankLine
$D[u]:=\infty ~\forall u\in V$\;\label{line:startin}
$DP[p]:=\infty ~\forall p\in P$\;
$D[v]:=0$\;\label{line:endin}
\lIf{$v\in P$}{$DP[v]:=0$}
Let $Q$ be a queue\;
$Q.enqueue(v)$\;
\While{$Q$ is not empty and $\exists p\in P, D[p]=\infty$ \label{line:ciclo}}
   { $u := Q.dequeue()$\; \label{line:deq}
     \For {each $w$ in $N_G(u)$}{
        \If{$D[w]=\infty$}{
                   $D[w]:=D[u]+1$\;\label{line:up}
                   \lIf{ $w \in P$}{
                       $DP[w]:=D[w]$} 
                   \lIf {$t$ or $w \not \in P$}{ 
                       $Q.enqueue(w)$\label{line:enq}}
                   }    
        }
     }                  
                       
\Return DP
 
\caption{Procedure \bfs 
}
\label{alg:bfs}
\end{algorithm}

\begin{algorithm}[t]
\SetKwInput{Proc}{Algorithm}
\Proc{\mva}
\SetKwInOut{Input}{Input}
\Input{ A graph $G=(V,E)$ and a set of points $P\subseteq V$}
\SetKwInOut{Output}{Output}
\Output{ True if $P$ is a \mvs, False otherwise }
\BlankLine
\If {points in $P$ are in different connected components of $G$\label{line:comp}}
    {\Return False}
Let $H$  be the connected component of $G$ with points\;
\For{\mbox{each} $p \in P$\label{line:loop}}{
       \If {\bfs(H,P,p,False) $\not =$ \bfs(H,P,p,True)\label{line:call}}
          {\Return False}\label{line:exit}
          }
\Return True\label{line:end}
\caption{Algorithm \mva }
\label{alg:algoMV}
\end{algorithm}

\begin{theorem}\label{theo:P}
 Algorithm \mva solves \mvt in $O(|P|(|V|+|E|))$ time.  
\end{theorem}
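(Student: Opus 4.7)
The plan is to split the argument into a correctness part and a time-complexity bound. For correctness, the key step is to characterise what Procedure \bfs returns. When invoked with flag $t=$True, the code is an ordinary BFS from $v$, so the returned vector $DP$ records $d_H(v,u)$ for every $u\in P$. When invoked with $t=$False, the only change is that a newly discovered vertex $w$ is enqueued \emph{only} when $w\notin P$; a vertex of $P$ is still assigned its BFS distance in $DP$ but is never expanded. Applying the standard BFS invariant to the subgraph obtained by forbidding vertices of $P\setminus\{v,u\}$ as internal nodes, $DP[u]$ equals the length of a shortest $(v,u)$-path whose internal vertices avoid $P$, that is $d_{H-(P\setminus\{v,u\})}(v,u)$, or $\infty$ if no such path exists. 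Hence the two invocations at line~\ref{line:call} return equal vectors if and only if, for every $u\in P$, there is a shortest $(p,u)$-path in $H$ avoiding the other points of $P$, which is exactly the condition that $p$ is mutually visible with every other point of $P$.

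Combining this with the check at line~\ref{line:comp}, which correctly rejects as soon as two points lie in different connected components (no path, hence no shortest path, connects them), Algorithm \mva returns True exactly when every pair of points of $P$ is mutually visible, that is, when $P$ is a mutual-visibility set. Since mutual visibility is symmetric, iterating the check over all $p\in P$ (rather than over the unordered pairs) is more than enough.

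For the complexity, each call to Procedure \bfs is a standard BFS and therefore runs in $O(|V|+|E|)$ time: initialisation of the distance array is $O(|V|)$; each vertex is enqueued and dequeued at most once; each edge is scanned a constant number of times; and the loop-guard test ``$\exists p\in P:D[p]=\infty$'' can be made constant-time by maintaining an integer counter of still-unreached points of $P$, decremented at each first visit. The preliminary connectivity check at line~\ref{line:comp} is also doable with a single BFS, i.e.\ in $O(|V|+|E|)$ time. The main loop at line~\ref{line:loop} performs $|P|$ iterations with two \bfs invocations each, so the total running time is $O(|P|(|V|+|E|))$, as claimed.

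The main subtlety I would stress in the write-up is that a single BFS with $t=$False simultaneously yields, for every target $u\in P$, the correct avoid-$P\setminus\{v,u\}$ distance, even though the forbidden set depends on $u$. This works precisely because the procedure never expands through any point of $P$: the first time such a $u$ is reached it is reached along a path with only non-$P$ internal vertices, and BFS guarantees that this path is of minimum length among all such paths, which is exactly the distance we need.
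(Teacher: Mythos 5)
Your proposal is correct and follows essentially the same route as the paper: characterise the two invocations of Procedure \bfs as computing, for each $u\in P$, the distance $d_H(p,u)$ and the distance in $H-(P\setminus\{p,u\})$ (the latter valid for all targets simultaneously because points are never expanded), observe that equality of the two vectors for every $p$ is exactly mutual visibility of $p$ with all other points, and bound the running time by $|P|$ pairs of $O(|V|+|E|)$ BFS calls plus the connectivity check. Your remark that the loop guard ``$\exists p\in P,\ D[p]=\infty$'' should be implemented with a counter of unreached points is a slightly more careful accounting than the paper's, which simply asserts the $O(|V|+|E|)$ bound for the modified BFS.
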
  
\begin{proof}
When $G$ is connected, Algorithm \mva (see Figure~\ref{alg:algoMV}) calculates the distances between any pair of points $u,v\in P$ both in $G$ and in $G- P\setminus\{u,v\}$, the graph obtained by removing all the points in $P$ except $u$ and $v$ (loop at Line~\ref{line:loop}). To this end, \mva uses Procedure \bfs (see. Figure~\ref{alg:bfs}). If the distances are equal (and Line~\ref{line:end} is reached) then there exits a shortest $(u,v)$-path   without points, that is $u$ and $v$ are in mutual visibility. Otherwise, $P$ is not a \mvs (Line~\ref{line:exit}).

Procedure \bfs is a variant of the breadth-first search algorithm that updates two distance vectors: the distance vector $D$, for the distances of $v$ with each vertex in the graph, and the distance vector $DP$, for the distances of $v$ with the points in $P$. These distance vectors are initialized at Lines~\ref{line:startin}--\ref{line:endin}. To track all the visited vertices, a queue $Q$ is initialized with the vertex $v$. Then, at Line~\ref{line:ciclo}, a loop starts, ending when all the points are visited or there are no more vertices to visit. Within the loop, the first vertex $u$ in $Q$ is dequeued at Line~\ref{line:deq}. For each non-visited neighbor $w$ of $u$   its distance $D[w]$ from $v$ is correctly updated to $D[u]+1$ (see Line~\ref{line:up}). If $w$ is a point, this distance is recorded in $DP$. Finally, at Line~\ref{line:enq}, $w$ is enqueued in $Q$ if it is not a point or the distances must be calculated in $G$ (that is, if $t$ is True). Note that if $t$ is False and $w$ is a point, $w$ is not enqueued in $Q$ because any shortest path between $v$ and a point $p\in P$, $p\not = w$, useful to calculate $d_G(v,p)$ and to test the mutual visibility of $v$ and $p$, cannot pass through $w$.
Procedure \bfs ends by returning the distance vector $DP$. 

Algorithm \mva first checks if the points in $P$ are in different connected components of $G$ at Line~\ref{line:comp}. In this case $P$ is not a \mvs and the algorithm correctly returns False.
If all the points are in the same connected component $H=(V_H,E_H)$ (and hence in $G$, if it is connected), for each point $p$ in $P$ it calculates the distances of $p$ from each other point $u\in P$, both in $H$ and in $H-P\setminus \{p,u\}$ (see Line~\ref{line:call}). If at least one of these distances is different in the two graphs, then Algorithm \mva correctly returns False, otherwise True.

Procedure \bfs works in  $O( | V | + | E | ) $ time, since every vertex and every edge will be explored in the worst case. Algorithm \mva calls Procedure \bfs at most two times for each $p\in P$, then the overall time is $O(|P|( | V | + | E |) ) $.
\end{proof}



\section{\Mvs for special graph classes}\label{sec:graphs}
In this section we study the \mvn for specific graph classes and provide some  results useful to calculate maximum {\mvs}s in polynomial time in these graphs.

\subsection{Graph characterization by \mvn}
The following lemma characterizes some graph classes in terms of their \mvn.
\begin{lemma}\label{lem:char}
Let $G=(V,E)$ be a graph such that $|V|=n$. Then  
 \begin{enumerate}
  \item $\mu(G)=1 \iff G \sim \overline{K_n}$ (if $G$ is connected: $\mu(G)=1 \iff G \sim K_1$);
  \item $\mu(G)=2\iff$ $n>1$ and $G\sim P_n$ or $G$ is the disjoint union of at most $n-1$ path graphs;
  \item $\mu(G)=|V|\iff G \sim K_n$;
  \item if $G$ is connected and $|V|>2$: $\mu(G)=|E|\iff G \sim K_{1,n-1}$ or $G$ is a triangle
 \end{enumerate}

\end{lemma}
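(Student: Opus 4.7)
The plan is to dispatch the four characterisations one by one, using the existing upper bounds on $\mu(G)$ and exhibiting matching \mvs{}s in each direction. Two elementary facts recur throughout: any two adjacent vertices form a \mvs of size two (the edge itself is a shortest path containing no intermediate point), and any two vertices in different connected components are never mutually visible. Item~1 then reduces to the former: $\mu(G) = 1$ forces $E = \emptyset$, whence $G \sim \overline{K_n}$. Item~3 reduces to the latter plus a distance argument: if $\mu(G) = n$ then $V$ itself is a \mvs, but any non-adjacent pair $u,v$ would have every shortest path blocked by an intermediate point of $V = P$, a contradiction, so $G \sim K_n$.

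For Item~2, Lemma~\ref{lem:omegaDelta} gives $\Delta(G) \leq 2$, hence every connected component of $G$ is a path or a cycle. Since each component is trivially a convex subgraph, Lemma~\ref{lem:mu_conv} combined with $\mu(C_n) = 3$ from Lemma~\ref{lem:PnCn} rules out cycles, so $G$ is a disjoint union of paths. The bound ``at most $n-1$ paths'' merely records the fact that at least one component must carry an edge, which is needed for $\mu(G) \geq 2$. The converse direction is straightforward because any \mvs lies in a single component and $\mu(P_k) = 2$ by Lemma~\ref{lem:PnCn}.

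Item~4 is the most delicate. From $\mu(G) = |E|$, the trivial bound $\mu(G) \leq n$ and the connectivity bound $|E| \geq n - 1$ we get $|E| \in \{n-1, n\}$. If $|E| = n$, Item~3 forces $G \sim K_n$, and the equation $n(n-1)/2 = n$ singles out $n = 3$, giving the triangle. If $|E| = n - 1$, then $G$ is a tree, and the crux is the inequality $\mu(T) \leq L$, where $L$ denotes the number of leaves of $T$: in a tree every internal vertex is an articulation vertex, so Lemma~\ref{lem:art} supplies a maximum \mvs consisting only of leaves. Since $n > 2$ implies $L \leq n - 1$, the equality $\mu(G) = n - 1$ forces $L = n - 1$, i.e.\ a unique internal vertex, so $G \sim K_{1,n-1}$; conversely, the $n - 1$ leaves of a star are pairwise mutually visible through length-two paths via the (non-point) centre, matching this bound. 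The main obstacle is precisely the inequality $\mu(T) \leq L$ for trees: without Lemma~\ref{lem:art} one would need a separate induction on the subtrees hanging at an internal vertex, but with an articulation-free maximum \mvs in hand the argument collapses to counting leaves.
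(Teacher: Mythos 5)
Your proposal is correct, and for items 1--3 it follows essentially the same route as the paper: item 1 via the presence/absence of an edge, item 3 via the observation that a non-adjacent pair would be blocked by intermediate points of $V$ itself, and item 2 via $\mu(G)\geq\Delta(G)$ (Lemma~\ref{lem:omegaDelta}) forcing $\Delta(G)\leq 2$ and $\mu(C_k)=3$ (Lemma~\ref{lem:PnCn}) excluding cycle components -- the paper phrases this as a case analysis by contradiction on $\Delta(G)$, while you argue directly and justify the exclusion of cycles by noting components are convex and invoking Lemma~\ref{lem:mu_conv}, but the ingredients are identical. The genuine divergence is in item 4, tree case. The paper works directly with the unique missing vertex: if the non-point were a leaf (and $|V|>3$), or if there were two internal vertices, one would find three points lying on a path of the tree, contradicting Lemma~\ref{lem:PnCn}; this pins the non-point down as the unique internal vertex. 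You instead derive the upper bound $\mu(T)\leq L$ (number of leaves) from Lemma~\ref{lem:art}, since in a tree every internal vertex is an articulation vertex, and then let the count $\mu(G)=n-1\leq L\leq n-1$ force a unique internal vertex. Your route is more modular -- it is in effect the upper-bound half of Corollary~\ref{cor:tree}, obtained before Section~\ref{sec:block}, and since Lemma~\ref{lem:art} precedes this lemma there is no circularity -- while the paper's argument is self-contained and avoids leaning on the (somewhat delicate) exchange argument behind Lemma~\ref{lem:art}. Both are sound; you might just spell out the trivial converse for the triangle ($\mu(C_3)=3=|E|$) and the one-line reason a tree with exactly one internal vertex is $K_{1,n-1}$, which you currently leave implicit.
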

\begin{proof}
 \begin{enumerate}
  \item $(\Rightarrow)$ Since $\mu(G)=1$, $E$ must be empty otherwise there exist  $xy\in E$, and $x$ and $y$ are mutually visible, so $\mu(G)\geq 2$. Hence $G$ is a graph with $n$ vertices and no edges, that is $\overline{K_n}$\\
        $(\Leftarrow)$ Since $E$ is empty there are no paths between vertices, then any \mvs cannot have more than one point. Hence $\mu(G)=1$.
  \item $(\Rightarrow)$ Since $\mu(G)=2$, then $n>1$. Assume now by contradiction that $G$ is not isomorphic to $P_n$ and $G$ is not the disjoint union of $n-1$ path graphs. Since at least one connected component of $G$ has at least two vertices, $\Delta(G)$ cannot be zero. If $\Delta(G)=1$, $G$ would be the disjoint union of $K_1$ and $P_2$ graphs, but this is not possible. For the same reason, if $\Delta(G)=2$ at least one connected component must be a cycle graph $C_k$, for a certain $k$, impossible since $\mu(C_k)=3$ by Lemma~\ref{lem:PnCn}. Then $\Delta(G)\geq 3$, but by Lemma~\ref{lem:gpDelta}, also in this case $\mu(G)\geq 3$.\\
  $(\Leftarrow)$ Since $n\geq2$, $\mu(G)=2$ by Lemma~\ref{lem:PnCn} applied to a connected component of $G$ with more than two vertices, that must exist since the connected components are at most $n-1$.
  \item $(\Rightarrow)$ Since all vertices are in mutual visibility then $G$ is connected. Moreover each $u,v\in V$ must be adjacent otherwise in any shortest path connecting them there is at least one vertex, that is a point since the \mvs is $V$.\\
  $(\Leftarrow)$ Obvious since all pair of vertices are adjacent and then mutually visible.
 \item $(\Rightarrow)$ Since $G$ is connected, $|E|\geq |V|-1$. Moreover $|V|\geq \mu(G)= |E|$. Then $|E|\leq |V|\leq |E|+1$. If $|V|=|E|=\mu(G)$ then, by point 3), $G$ is a clique graph and then a triangle (the only case where $|E|=\frac{n(n-1)} 2 = n = |V|$). If $|V|=|E|+1$, $G$ is a tree with $|V|>2$ vertices and $\mu(G)=|V|-1$ points. Then only one vertex is not a point. It can be a leaf only if $|V|=3$ (and then $G\sim K_{1,2}$) otherwise there is a convex path connecting two leaves with at least three points in mutual visibility, a contradiction by Lemmas~\ref{lem:mv_subset} and~\ref{lem:PnCn}. For the same reason, when $G\not \sim K_{1,2}$, it is the only vertex that is not a leaf. Then the graph $G$ is a star with $n$ vertices, that is $G\sim K_{1,n-1}$.\\
 $(\Leftarrow)$ Obvious if $G$ is a triangle, otherwise, by Lemma~\ref{rem:homega}, $\mu(G)\geq\Delta(G)=deg(v)=|V|-1=|E|$, where $v$ is the center of $G$. However $\mu(G)$ cannot be larger than $|E|=|V|-1$ otherwise  $v$ should be a point, preventing the mutual visibility among the pendant vertices.
\end{enumerate}
\end{proof}

\subsection{Block graphs, Trees and Geodesic graphs}\label{sec:block}
\begin{figure}[t]
   \graphicspath{{fig/}}
   \centering
   \def\svgwidth{\columnwidth}
   \large\scalebox{0.5}{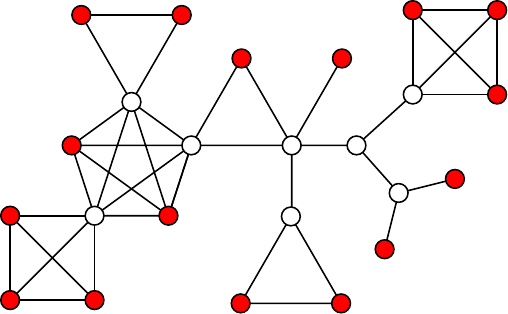}
 \caption{A block graph. Red vertices are points of the maximum \mvs.}
\label{fig:block}    
\end{figure}  

A \emph{block graph} is a graph in which every maximal biconnected subgraph (called \emph{block}) is a clique (see Figure~\ref{fig:block}). 
As an application of Lemma~\ref{lem:art} on articulation vertices, the next Theorem characterizes the largest {\mvs}s (an then the \mvn) for block graphs.

\begin{theorem}\label{theo:block}
 Let $G=(V,E)$ be a connected block graph and $X$ the set of its articulation vertices. $V\setminus X$ is a \mvs of $G$ and $\mu(G)=|V\setminus X|$.
\end{theorem}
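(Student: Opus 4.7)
The plan is to establish the equality by proving the two inequalities $\mu(G)\geq |V\setminus X|$ and $\mu(G)\leq |V\setminus X|$ separately; the first comes from exhibiting $V\setminus X$ as a mutual-visibility set, and the second follows immediately from Lemma~\ref{lem:art}.

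For the lower bound, I would take two arbitrary distinct non-articulation vertices $u,v\in V\setminus X$ and construct a shortest $(u,v)$-path whose internal vertices all lie in $X$. Since $G$ is a block graph and neither $u$ nor $v$ is an articulation vertex, each of them belongs to a unique block. If $u$ and $v$ belong to the same block $B$, then because $B$ is a clique $uv\in E$ and the path is just the edge, whose internal vertex set is empty. Otherwise, the block structure of $G$ yields a sequence of blocks $B_0, B_1,\ldots, B_k$ with $u\in B_0$, $v\in B_k$, and articulation vertices $a_i\in B_{i-1}\cap B_i$ for $i=1,\ldots,k$; the path $(u,a_1,a_2,\ldots,a_k,v)$ is a walk in $G$ since consecutive vertices lie in a common clique, and it is a shortest $(u,v)$-path because any $(u,v)$-walk must pass through each cut vertex $a_i$ (by the definition of a block structure) and within every intermediate block the direct edge $a_{i}a_{i+1}$ is available, so no internal non-articulation vertex of $B_i$ can shorten the route. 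All internal vertices on this path are articulation vertices, hence lie in $X$ and not in $V\setminus X$. Consequently, $u$ and $v$ are mutually visible, and since the choice was arbitrary, $V\setminus X$ is a \mvs, giving $\mu(G)\geq |V\setminus X|$.

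For the upper bound, I would invoke Lemma~\ref{lem:art}, which guarantees a maximum \mvs $P\in M(G)$ with $P\cap X=\emptyset$. Then $P\subseteq V\setminus X$, whence $\mu(G)=|P|\leq |V\setminus X|$. Combining both inequalities yields $\mu(G)=|V\setminus X|$ and that $V\setminus X$ attains this value.

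The main obstacle is the justification that in a block graph the canonical block-by-block route between non-articulation vertices is actually a \emph{shortest} path; once one sees that every intermediate block $B_i$ contributes only one edge (since articulation vertices sharing $B_i$ are adjacent in the clique $B_i$, so detouring through other vertices of $B_i$ can only lengthen the walk), the rest of the argument is bookkeeping on the block-cut structure.
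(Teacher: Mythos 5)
Your proof is correct and follows essentially the same route as the paper: show that between two non-articulation vertices the (unique) shortest path in a block graph has only articulation vertices as internal vertices, so $V\setminus X$ is a \mvs, and then cap $\mu(G)$ by $|V\setminus X|$ via Lemma~\ref{lem:art}. Your justification of why the block-by-block path $(u,a_1,\ldots,a_k,v)$ is shortest is in fact more explicit than the paper's one-line assertion, but the argument is the same in substance.
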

\begin{proof} 
 By Lemma~\ref{lem:art} there exists a maximum \mvs $P\in M(G)$ without vertices in $X$. To show that $P$ includes all the vertices of $G$  in $V\setminus X$, consider two of them $u,v$ and the shortest $(u,v)$-path (note that in block graphs the shortest path between two vertices is unique). If $u$ and $v$ belong to the same block then they are adjacent since a block is a clique by definition. Otherwise, the shortest $(u,v)$-path passes only through articulation vertices of $G$, since they are induced paths. Then $u$ and $v$ are mutually visible. By the generality of $u$ and $v$ the lemma holds. 
\end{proof}

An immediate consequence of Theorem~\ref{theo:block} is the following corollary holding for trees.

\begin{corollary}\label{cor:tree}
 Let $T=(V,E)$ be a tree and $L$ the set of its leaves. Then $L$ is a \mvs and $\mu(T)=|L|$.
\end{corollary}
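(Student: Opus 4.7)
The plan is to derive this directly from Theorem~\ref{theo:block} by verifying that every tree is a block graph whose set of articulation vertices is exactly the set of non-leaves. First I would observe that in a tree $T$ every maximal biconnected subgraph is a single edge: any biconnected subgraph with three or more vertices would contain a cycle, which is forbidden in a tree. Since an edge is a $K_2$, i.e., a clique, $T$ satisfies the definition of a block graph.

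Next I would identify the articulation vertices. A vertex $v$ of $T$ is an articulation vertex precisely when $T-v$ has more connected components than $T$, and in a tree this happens exactly when $\deg_T(v)\geq 2$, i.e., when $v$ is not a leaf. Hence if $X$ denotes the set of articulation vertices of $T$, then $V\setminus X = L$.

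Finally, applying Theorem~\ref{theo:block} to the connected block graph $T$ yields that $V\setminus X$ is a \mvs of $T$ and $\mu(T)=|V\setminus X|$. Substituting $V\setminus X=L$ gives the statement. The only mild care needed is the degenerate case $T\sim K_1$ or $T\sim K_2$, where the convention about leaves may be checked directly: for $K_1$ the unique vertex is a point and $\mu=1$, for $K_2$ both vertices are leaves and mutually visible, so $\mu=2$, both matching $|L|$. No real obstacle arises, since the corollary is essentially a specialization of the block-graph theorem.
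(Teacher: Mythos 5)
Your proposal is correct and follows essentially the same route as the paper: observe that a tree is a block graph whose blocks are the edges ($K_2$ cliques) and whose articulation vertices are exactly the non-leaves, then invoke Theorem~\ref{theo:block}. The explicit check of the degenerate cases $K_1$ and $K_2$ is a small extra care the paper omits, but it does not change the argument.
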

\begin{proof}
 A tree is a block graph where the blocks are the edges ($K_2$ subgraphs) in $E$ and each vertex in $V$ that is not a leaf is an articulation vertex. Then $L$ is a maximum \mvs by Theorem~\ref{theo:block}.
\end{proof}

Figure~\ref{fig:noconvex} shows the maximum \mvs $P$ of a tree. However,  the maximum \mvs of trees and block graphs is not unique.
This is the case when the removal of an articulation point creates a new component that is a path $P_n$, $n\geq 2$. In that case, we can create several {\mvs}s of maximum size by choosing single vertices of $P_n$ for each of them.

In~\cite{manuel:18} the same result on block graphs is achieved for general position sets by using the concept of simplicial vertex. A vertex is \emph{simplicial} if its neighbours induce a complete graph. 

\begin{theorem}
 \emph{(\cite{manuel:18}, Th. 3.6)} Let $S$ be the set of simplicial vertices of a block graph $G$. Then $S$ is a maximum general position set and hence $\gp{G}=|S|$. 
\end{theorem}

Indeed, in Figure~\ref{fig:block}, red vertices are simplicial vertices. Then, for block graphs, since the vertices are simplicial vertices or articulation vertices, we have $\gp{G}=\mu(G)$. This result can be easily generalized to \emph{geodetic graphs}: a graph is geodetic if the shortest path between any pair of vertices is unique, like in block graphs and trees. 

\begin{remark}\label{rem:geo} If $G$ is a a geodetic graph then $\gp{G}=\mu(G)$.
\end{remark}
The contrary of Remark~\ref{rem:geo} is not true. Cycle graphs $C_{2n}$, are not geodetic, but, for $n\geq3$, $\gp{C_{2n}}=\mu(C_{2n})=3$  (see~\cite{manuel:18} and Lemma~\ref{lem:PnCn}).

\subsection{Grids, Tori}\label{sec:grid}

\begin{figure}[t]
   \graphicspath{{fig/}}
   \centering
   \def\svgwidth{\columnwidth}
   \large\scalebox{0.8}{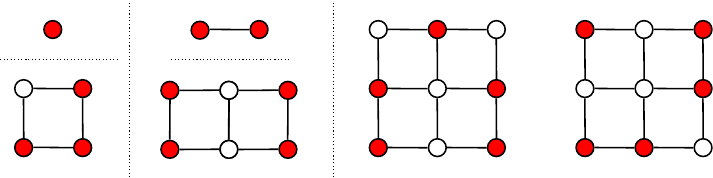}
 \caption{On the left: mutual-visibility sets for grid graphs $\Gamma_{1,1}$, $\Gamma_{1,2}$, $\Gamma_{2,2}$, and $\Gamma_{2,3}$. On the right: two non isomorphic mutual-visibility sets for $\Gamma_{3,3}$. The size of each \mvs determines the \mvn of the corresponding graph.}
\label{fig:smallgrids}
\end{figure} 

\begin{figure}[t]
   \graphicspath{{fig/}}
   \centering
   \def\svgwidth{\columnwidth}   
   \large\scalebox{0.20}{{\huge 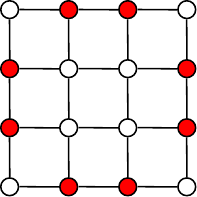}}~a)~~~~~~~~~~~~
   \large\scalebox{0.50}{{\huge 
\begingroup%
  \makeatletter%
  \providecommand\color[2][]{%
    \errmessage{(Inkscape) Color is used for the text in Inkscape, but the package 'color.sty' is not loaded}%
    \renewcommand\color[2][]{}%
  }%
  \providecommand\transparent[1]{%
    \errmessage{(Inkscape) Transparency is used (non-zero) for the text in Inkscape, but the package 'transparent.sty' is not loaded}%
    \renewcommand\transparent[1]{}%
  }%
  \providecommand\rotatebox[2]{#2}%
  \newcommand*\fsize{\dimexpr\f@size pt\relax}%
  \newcommand*\lineheight[1]{\fontsize{\fsize}{#1\fsize}\selectfont}%
  \ifx\svgwidth\undefined%
    \setlength{\unitlength}{203.47987124bp}%
    \ifx\svgscale\undefined%
      \relax%
    \else%
      \setlength{\unitlength}{\unitlength * \real{\svgscale}}%
    \fi%
  \else%
    \setlength{\unitlength}{\svgwidth}%
  \fi%
  \global\let\svgwidth\undefined%
  \global\let\svgscale\undefined%
  \makeatother%
  \begin{picture}(1,0.88132548)%
    \lineheight{1}%
    \setlength\tabcolsep{0pt}%
    \put(0,0){\includegraphics[width=\unitlength,page=1]{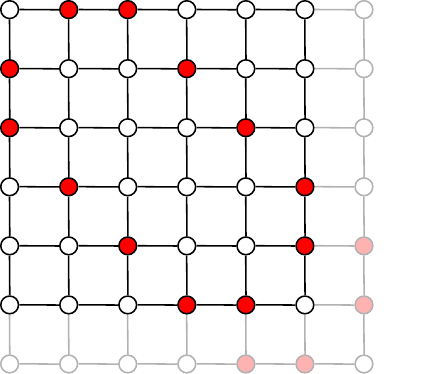}}%
    \put(0.59806658,0.18274323){\color[rgb]{0,0,0}\makebox(0,0)[lt]{\lineheight{1.25}\smash{\begin{tabular}[t]{l}$u$\end{tabular}}}}%
    \put(0.73758459,0.32155154){\color[rgb]{0,0,0}\makebox(0,0)[lt]{\lineheight{1.25}\smash{\begin{tabular}[t]{l}$v$\end{tabular}}}}%
    \put(0.87684077,0.32258103){\color[rgb]{0,0,0}\makebox(0,0)[lt]{\lineheight{1.25}\smash{\begin{tabular}[t]{l}$z$\end{tabular}}}}%
    \put(0.87741129,0.1820726){\color[rgb]{0,0,0}\makebox(0,0)[lt]{\lineheight{1.25}\smash{\begin{tabular}[t]{l}$y$\end{tabular}}}}%
    \put(0.73758459,0.04255343){\color[rgb]{0,0,0}\makebox(0,0)[lt]{\lineheight{1.25}\smash{\begin{tabular}[t]{l}$x$\end{tabular}}}}%
    \put(0.59686121,0.04343141){\color[rgb]{0,0,0}\makebox(0,0)[lt]{\lineheight{1.25}\smash{\begin{tabular}[t]{l}$w$\end{tabular}}}}%
  \end{picture}%
\endgroup%
}}~b)
 \caption{a) The unique maximum mutual-visibility set for $\Gamma_{4,4}$. b) A maximum mutual-visibility set for $\Gamma_{6,6}$. An extension of this set for $\Gamma_{7,7}$, when $\Gamma_{6,6}$ is seen as one of its subgraphs, is obtained by removing points $u$ and $v$ and by adding points $w$, $x$, $y$, and $z$.}
\label{fig:grids}
\end{figure} 

For grid graphs $\Gamma_{m,n}$,   Figure~\ref{fig:smallgrids} represents the {\mvs}s of maximum size for small values of $m$ and $n$, and Theorem~\ref{theo:grid} gives the values of $\mu(\Gamma_{m,n})$ for $m>3$ and $n>3$.
These values are based on maximum {\mvs}s shown in Figure~\ref{fig:grids}. Furthermore, Table~\ref{tab:grid} shows the values of $\mu(\Gamma_{m,n})$ for all the possible settings of $m$ and $n$, $m\leq n$.

\begin{table}[t]
\begin{center}
\begin{tabular}{c|c|c|c|c}
$m$&$n$&Graph $G$&$\mu(G)$&Reference\\\hline
1 &1 & $K_1$& 1&Lemma~\ref{lem:char}\\\hline
1 &$n>1$& $P_n$&2&Lemma~\ref{lem:PnCn}\\\hline
2 & 2 & $C_4$ & 3&Lemma~\ref{lem:PnCn}\\\hline
2 & $n>2$& $\Gamma_{2,n}$&4&Lemmas~\ref{lem:mu_bound} and~\ref{lem:PnCn}\\\hline
3 & 3 &  $\Gamma_{3,3}$&5&Figure~\ref{fig:smallgrids}\\\hline
3 & $n>3$ &  $\Gamma_{3,n}$&6&Lemmas~\ref{lem:mu_bound} and~\ref{lem:PnCn}\\\hline
4 & 4 &  $\Gamma_{4,4}$&8&Figure~\ref{fig:grids} \\\hline
$m> 3$&$n> 3$&  $\Gamma_{m,n}$&$2m$&Theorem~\ref{theo:grid}\\
\end{tabular}
\end{center}
 \caption{Values of $\mu(G)$ when $G\sim\Gamma_{m,n}$, for all the possible value of $m$ and $n$ such that $m\leq n$.}\label{tab:grid}
\end{table}

\begin{theorem}\label{theo:grid}
 Let $\Gamma_{m,n}=P_m\sq P_n$ be a grid graph such that $m>3$ and $n>3$ then
 $$\mu(\Gamma_{m,n})=2 \cdot \min(m,n).$$
\end{theorem}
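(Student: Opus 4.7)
The plan is the following. By symmetry of the Cartesian product, $\Gamma_{m,n}\sim\Gamma_{n,m}$, so I assume without loss of generality that $m\le n$; the statement then reduces to $\mu(\Gamma_{m,n})=2m$.

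For the upper bound I partition $V(\Gamma_{m,n})$ into its $m$ rows $R_1,\ldots,R_m$, where each $R_i=\{(u_i,v_j): 0\le j\le n-1\}$ induces a copy of $P_n$. Since any two vertices of the same row in a grid admit a unique shortest path that lies entirely in that row, $R_i$ is a convex subgraph of $\Gamma_{m,n}$; hence $\conv{R_i}=R_i$ and $\mu(\conv{R_i})=\mu(P_n)=2$ by Lemma~\ref{lem:PnCn}. Since $\bigcup_{i=1}^m R_i=V(\Gamma_{m,n})$, Lemma~\ref{lem:mu_bound} yields $\mu(\Gamma_{m,n})\le 2m$. Applying the same argument to the column partition gives $\mu(\Gamma_{m,n})\le 2n$, and together they prove $\mu(\Gamma_{m,n})\le 2\min(m,n)$.

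For the lower bound I exhibit an explicit \mvs of size $2m$, following the pattern illustrated for $\Gamma_{4,4}$ and $\Gamma_{6,6}$ in Figure~\ref{fig:grids}. The arrangement places exactly two points in each row and at most two in each column, chosen so that (i) no three chosen points lie on a common row or column with one strictly between the other two, and (ii) for every pair of chosen points $p=(r_1,c_1)$, $q=(r_2,c_2)$ with $r_1<r_2$ and $c_1\ne c_2$, at least one of the two L-shaped monotone shortest paths between them --- going first along column $c_1$ from $p$ to $(r_2,c_1)$ and then along row $r_2$ to $q$, or first along row $r_1$ from $p$ to $(r_1,c_2)$ and then along column $c_2$ to $q$ --- contains no other chosen point. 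The local modification described in the caption of Figure~\ref{fig:grids}, swapping two points for four when passing from $\Gamma_{m,n}$ to $\Gamma_{m+1,n+1}$, indicates how the construction can be scaled up.

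The main obstacle is the systematic verification of property~(ii) for a generic $m\times n$ grid. A potential obstruction on the first L-path must be a chosen point lying either in column $c_1$ strictly between rows $r_1$ and $r_2$, or in row $r_2$ strictly between columns $c_1$ and $c_2$; symmetrically for the other L-path. The heart of the argument is therefore to specify the two horizontal positions assigned to each row so that, for every pair of rows, at least one of the two L-paths is free of such obstructions. This requires a careful case analysis on the relative column positions of the two row pairs involved, after which the argument combines with the upper bound above to conclude $\mu(\Gamma_{m,n})=2m=2\min(m,n)$.
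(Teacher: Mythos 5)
Your upper bound is correct and is exactly the paper's argument: rows and columns are convex copies of $P_n$ and $P_m$, so Lemmas~\ref{lem:mu_bound} and~\ref{lem:PnCn} give $\mu(\Gamma_{m,n})\leq 2\cdot\min(m,n)$. The problem is the lower bound: you never actually produce a \mvs of size $2m$. You describe the properties such a set should have (two points per row, the L-path condition (ii)) and then state that specifying the column positions and verifying (ii) ``requires a careful case analysis'' which you do not carry out. That verification is not a routine afterthought --- it is precisely the content of the lower bound, and without an explicit placement rule there is nothing to check; the caption of Figure~\ref{fig:grids} only hints at a local modification and cannot substitute for a general construction. As it stands, your argument proves only $\mu(\Gamma_{m,n})\leq 2\cdot\min(m,n)$.

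For comparison, the paper closes this gap by writing down explicit coordinates inside a convex $\Gamma_{k,k}$ subgrid with $k=\min(m,n)$: the points $(u_1,v_0),(u_2,v_0),(u_0,v_1),(u_3,v_1)$, then $(u_{j-2},v_j),(u_{j+2},v_j)$ for $j=2,\ldots,k-3$, and finally $(u_{k-4},v_{k-2}),(u_{k-1},v_{k-2}),(u_{k-3},v_{k-1}),(u_{k-2},v_{k-1})$, i.e.\ a ``diagonal band'' at horizontal offsets $\pm2$ with adjusted placements in the first and last two rows ($k=4$ being the separate base case of Figure~\ref{fig:grids}a). Because $\Gamma_{k,k}$ is convex in $\Gamma_{m,n}$, visibility checked in the subgrid transfers to the whole grid. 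If you want to complete your proposal, you should either adopt such an explicit pattern and verify pairwise visibility (note that restricting attention to the two L-shaped shortest paths, as in your condition (ii), is sufficient but may be unnecessarily rigid --- general monotone staircase paths are also available), or design your own placement rule for the two column indices per row together with the promised case analysis; without one of these, the theorem is not proved.
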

\begin{proof}
 Let $P_m =(u_0, u_1,\ldots, u_{m-1})$ and $P_n =(v_0, v_1,\ldots, v_{n-1})$.
 In each subgraph $((u_0, v_i), (u_1,v_i),\ldots, (u_{m-1},v_i))$, representing  the $i$-row of $\Gamma_{m,n}$, there are at most two points as an immediate consequence of Lemmas~\ref{lem:mu_bound} and~\ref{lem:PnCn}, since a row is a convex subgraph of $\Gamma_{m,n}$ and is a path. The same holds for each subgraph $((u_j, v_0), (u_j,v_1),\ldots, (u_j,v_{n-1}))$, representing  the $j$-column of the grid. Then $\mu(\Gamma_{m,n})\leq  2 \cdot \min(m,n)$. To show that the equality holds, let $k=\min(m,n)$ and consider a subgraph $\Gamma_{k,k}$ of  $\Gamma_{m,n}$. If $k=4$, the unique maximum \mvs of $\Gamma_{4,4}$ is given by:
 $(u_1,v_0), (u_2,v_0), (u_0,v_1), (u_3,v_1),(u_0, v_{2}), (u_3, v_2), (u_1,v_3), (u_2,v_3)$ and is represented in Figure~\ref{fig:grids}a. Then $\mu(\Gamma_{k,k})=8$, and since there are two points for each row and each column, then $\mu(\Gamma_{m,n})=8=2 \cdot \min(m,n)$.
 
For $k\geq 5$, consider again a grid subgraph $\Gamma_{k,k}$ of $\Gamma_{m,n}$ and the set of points:
 $(u_1,v_0), (u_2,v_0), (u_0,v_1), (u_3,v_1),$ $
 (u_{j-2},v_j),(u_{j+2},v_j)$, for each $j = 2, \ldots, k-3$, and
 $ (u_{k-4}, v_{k-2}), (u_{k-1}, v_{k-2}), (u_{k-3},v_{k-1}), (u_{k-2},v_{k-1})$.
 This set generalizes the solution given for $k=4$ and is represented in Figure~\ref{fig:grids}b. Since there are two points for each row and each column and all the points are in mutual visibility, then $\mu(\Gamma_{m,n})=2 \cdot \min(m,n)$.
\end{proof}

In~\cite{manuel:18b} it is proved that $\gp{\Gamma_{n,n}}=4$ for $n\geq3$. Since $\mu(\Gamma_{n,n})=2n$ for $n>3$, then the difference between the two numbers can be arbitrarily large.

\begin{figure}[t]
   \graphicspath{{fig/}}
   \centering
   \def\svgwidth{\columnwidth}   
   \large\scalebox{0.25}{{\huge 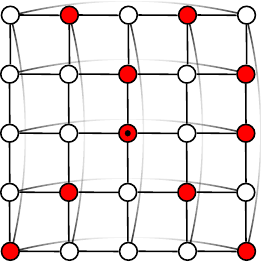}}~a)~~~~~~~~~~~~~~~
   \large\scalebox{1}{{\huge 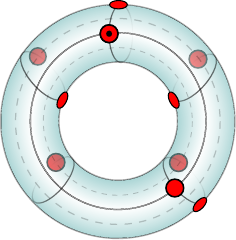}}~b)\\~\\
   \large\scalebox{0.35}{{\huge 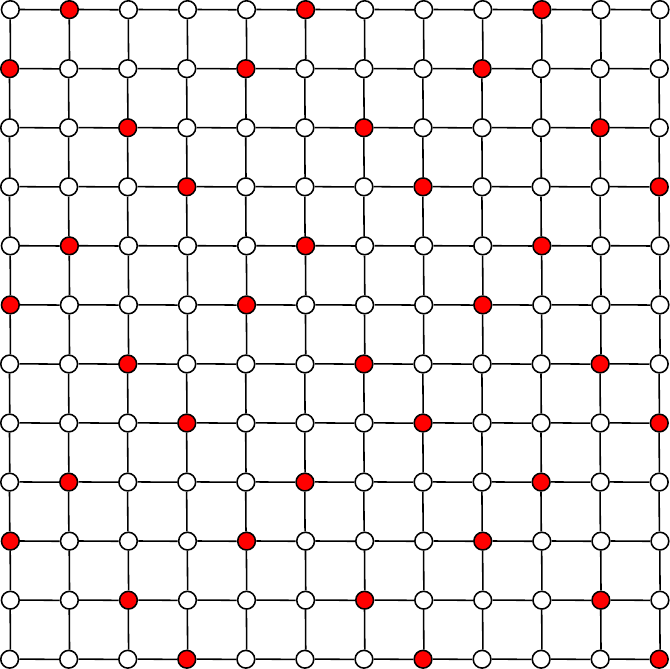}}~c)~~~~~~~~~~~~~~
   \large\scalebox{0.28}{{\huge 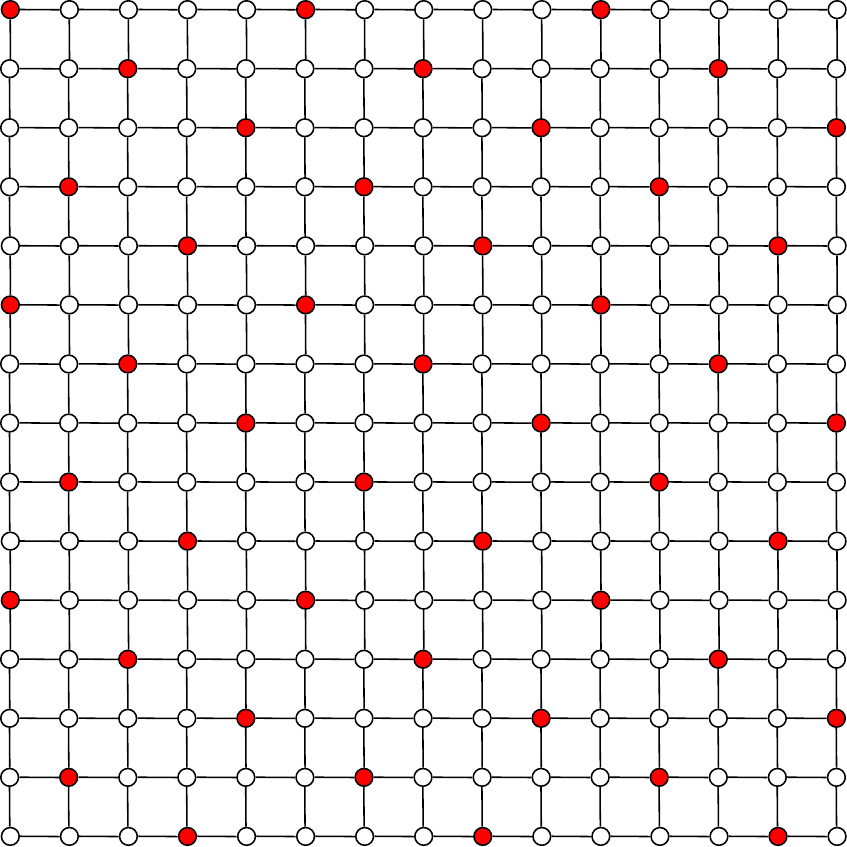}}~d)
 \caption{a) A torus $C_5 \sq C_5$. Vertices in red form a maximum \mvs.  b) The same graph represented as a three-dimentional torus. The dotted vertex corresponds to the dotted vertex in a). c) A solution for a torus $T_{12,12}$ such that $\mu(T_{12,12})=3\cdot12$. d) A solution for a torus  $T_{15,15}$ such that $\mu(T_{15,15})=3\cdot15$. }\label{fig:tori}
\end{figure} 

\bigskip
For tori $T_{m,n}=C_m\sq C_n$, notice that each copy of $C_m$ and $C_n$ is a convex subgraph of $T_{m,n}$. Then, by Lemmas~\ref{lem:mu_bound} and~\ref{lem:PnCn}, we derive:

\begin{corollary}\label{cor:tori}
  Let $T_{m,n}=C_m\sq C_n$ be a torus such that $m\geq3$ and $n\geq3$ then
 $$\mu(T_{m,n})\leq 3 \cdot \min(m,n).$$
\end{corollary}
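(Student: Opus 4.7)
The plan is to apply Lemma~\ref{lem:mu_bound} to a covering of $T_{m,n}$ by its rows (and, symmetrically, by its columns), each of which is a convex copy of a cycle on which Lemma~\ref{lem:PnCn} already controls the mutual-visibility number.

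Let $C_m=(u_0,\ldots,u_{m-1})$ and $C_n=(v_0,\ldots,v_{n-1})$. First I would single out, for each $i\in\{0,\ldots,m-1\}$, the row $R_i=\{(u_i,v_j)\mid j=0,\ldots,n-1\}$, and observe that $G[R_i]\sim C_n$ and that $\bigcup_{i=0}^{m-1} R_i=V(T_{m,n})$. The key preliminary step is to verify that each $R_i$ is a convex subgraph of $T_{m,n}$, so that $\conv{R_i}=R_i$. For this I would invoke the standard distance formula for Cartesian products,
\[
d_{T_{m,n}}\bigl((u_i,v_j),(u_{i'},v_{j'})\bigr)=d_{C_m}(u_i,u_{i'})+d_{C_n}(v_j,v_{j'}),
\]
which follows from the fact that any edge of $T_{m,n}$ changes exactly one coordinate, so the number of edges along any path is at least the sum of the distances covered in each factor. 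For two vertices of $R_i$ the first summand vanishes, so any shortest path between them has length $d_{C_n}(v_j,v_{j'})$ and cannot afford any edge that alters the first coordinate; hence every shortest path stays inside $R_i$, proving convexity.

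Once this is in hand, Lemma~\ref{lem:mu_bound} applied to the family $\{R_i\}_{i=0}^{m-1}$ gives
\[
\mu(T_{m,n})\;\le\;\sum_{i=0}^{m-1}\mu(\conv{R_i})\;=\;m\cdot\mu(C_n)\;=\;3m,
\]
where the last equality uses Lemma~\ref{lem:PnCn}. The exact same argument applied to the $n$ columns (each a convex copy of $C_m$) yields $\mu(T_{m,n})\le 3n$. Combining the two bounds gives $\mu(T_{m,n})\le 3\cdot\min(m,n)$, as required.

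The only mildly delicate step is the convexity claim for rows and columns; everything else is a direct application of Lemmas~\ref{lem:mu_bound} and~\ref{lem:PnCn}. I do not foresee a genuine obstacle, since the Cartesian-product distance decomposition is classical and forces any shortest path between same-row (resp.\ same-column) vertices to remain in that row (resp.\ column).
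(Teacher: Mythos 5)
Your proposal is correct and follows essentially the same route as the paper: cover the torus by its rows (or columns), note that each is a convex copy of a cycle, and combine Lemma~\ref{lem:mu_bound} with Lemma~\ref{lem:PnCn}; the only addition is that you spell out the convexity of rows and columns via the Cartesian-product distance formula, which the paper simply asserts, and your verification of it is sound.
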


However, the problem of finding $m$ and $n$ such that the \mvn of $T_{m,n}$ is equal to $3 \cdot \min(m,n)$ is still open. In general, solutions for tori are quite irregular, like that shown in Figure~\ref{fig:tori}a for a torus $T_{5,5}$, where the upper bound is not reached. It would be interesting to find the values of $m$ such that  $\mu(T_{m,m})$ reaches the upper bound of Corollary~\ref{cor:tori}. There are no {\mvs}s for tori $T_{m,m}$ such that $\mu(T_{m,m})=3\cdot m$, for $m\leq 11$ (result obtained with a backtracking algorithm that explored a space of $\binom{m}{3}^m$ possible solutions using Algorithm \mva as subprocedure). As shown in Figures~\ref{fig:tori}c and~\ref{fig:tori}d, for $m=12$ and $m=15$ there are tori such that $\mu(T_{12,12})= 3 \cdot 12$ and $\mu(T_{15,15})= 3 \cdot 15$. These solutions were found without the help of a computer, but a scalable solution, such as the one provided in Theorem~\ref{theo:grid} and shown in Figure~\ref{fig:grids} for grids, is not available. With respect to grids, the main difficulty is due to the fact that, for $m'>m$ and $n'>n$, $T_{m,n}$ is not a subgraph of $T_{m',n'}$, whereas $\Gamma_{m,n}$ is a subgraph of $\Gamma_{m',n'}$.

In~\cite{kpry:21}, Theorem 4.5, it is proved that if $r \geq s \geq 3, s \not = 4$, and $r \geq 6$, then $\gp{T_{r,s}}\in \{6, 7\}$.

\subsection{Complete bipartite graphs, cographs and more general graphs}

Let us start with a preliminary result about graphs such that almost all the vertices can be part of a \mvs.
\begin{lemma}\label{lem:V-1}
 Let $G=(V,E)$ be a graph. Then $\mu(G)\geq |V|-1$ if and only if there exists $v\in V$ adjacent to each vertex $u\in G-v$ such that $deg_{G-v}(u)<|V|-2$.
\end{lemma}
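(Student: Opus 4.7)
The plan is to prove the two implications separately. For the backward direction, I will use the hypothesized vertex $v$ to exhibit a \mvs of size $|V|-1$; for the forward direction, I will identify $v$ as the unique vertex missing from a \mvs of size $|V|-1$ and verify the required adjacency condition by exploiting the mutual-visibility axiom.

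$(\Leftarrow)$ I would assume such a $v$ exists and prove that $P = V \setminus \{v\}$ is a \mvs. For two points $u_1, u_2 \in P$, if $u_1 u_2 \in E$ they are mutually visible via the edge itself. If $u_1 u_2 \notin E$, then $u_2$ is a non-neighbor of $u_1$ inside $G-v$, so $deg_{G-v}(u_1) \leq |V|-3 < |V|-2$; by hypothesis $v u_1 \in E$, and symmetrically $v u_2 \in E$. Then $(u_1, v, u_2)$ is a shortest path (since $d(u_1,u_2) \geq 2$) whose only internal vertex $v$ lies outside $P$. Hence $P$ is a \mvs of size $|V|-1$ and $\mu(G) \geq |V|-1$.

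$(\Rightarrow)$ I would start from a \mvs $P$ with $|P| \geq |V|-1$; since mutual visibility is preserved under taking subsets (any shortest path avoiding $P$ internally also avoids any $P' \subseteq P$), I may assume $|P| = |V|-1$ and let $v$ denote the unique vertex of $V \setminus P$. For any $u \in V \setminus \{v\}$ with $deg_{G-v}(u) < |V|-2$, pick a non-neighbor $u' \in V \setminus \{v, u\}$ of $u$. Since $u, u' \in P$ and $d(u, u') \geq 2$, their mutual visibility demands a shortest $(u, u')$-path with no internal vertex in $P$; as the only vertex outside $P$ is $v$, this path must be $(u, v, u')$, yielding $v u \in E$. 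Hence $v$ has the stated adjacency property.

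The main subtlety is recognizing the arithmetic of the degree bound: $|V|-2$ is the maximum possible degree in $G-v$ (a graph on $|V|-1$ vertices), so $deg_{G-v}(u) < |V|-2$ is exactly equivalent to $u$ missing at least one neighbor inside $G-v$. This observation couples the hypothesis about $v$'s neighbors to the structure of non-adjacent pairs inside $P$ and makes both directions succeed via a simple length-$2$ detour through $v$; no case analysis beyond this is required.
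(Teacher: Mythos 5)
Your proof is correct and takes essentially the same route as the paper: in both directions the key step is the forced length-two detour $(u,v,u')$ through the unique non-point $v$, using that non-adjacency in $G-v$ is exactly the condition $deg_{G-v}(u)<|V|-2$. The only minor difference is bookkeeping: the paper dispatches the case $\mu(G)=|V|$ by citing the clique characterization of Lemma~\ref{lem:char}, while you reduce to a set of size exactly $|V|-1$ via the (correct) observation that any subset of a \mvs is again a \mvs; both handle that case equally well.
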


\begin{proof}
 $(\Rightarrow)$ If $\mu(G)=|V|$ then, by Lemma~\ref{lem:char}, $G$ is a clique graph and then the statement is obviously true.  If $\mu(G)=|V|-1$, then there exists a unique vertex $v$ of $G$ such that $v\not \in P$, where $P\in M(G)$. Let $u \in G-v$. If $deg_{G-v}(u)=|V|-2$, then $u$ is adjacent to any other point in $P$, and then is in mutual visibility with it. If $deg_{G-v}(u)<|V|-2$ then there exists at least a vertex $w \in P$ not adjacent to $u$. Since $u$ and $w$ are mutually visible, there must exist the path $(u,v,w)$, then $v$ is adjacent to $u$.
 
$(\Leftarrow)$ Let $P=V\setminus \{v\}$ be a set of points. Let us show that it is a \mvs and then that $ \mu(G)\geq |V|-1$. Let $u\in P$, if $deg_{G-v}(u)=|V|-2$, then, as noted above, $u$ is adjacent to any other vertex in $P$. Otherwise $deg_{G-v}(u)<|V|-2$ and $uv\in E$ by hypothesis. In this case, let $Q=P\setminus N_{G-v}[u]$ be the set of points in $P$ not adjacent to $u$. Then for each $w \in Q$, $deg_{G-v}(w)<|V|-2$. Hence $w$ is adjacent to $v$ and then $u$ and $w$ are in mutual visibility through the shortest path $(u,v,w)$. By the generality of $u$ and $w$, we have that $P$ is a \mvs of $G$.
\end{proof}

For complete bipartite graphs $K_{m,n}$, Table~\ref{tab:bip} reports the values of $\mu(K_{m,n})$ for small values of $m$ and $n$. 
Note that for $K_{2,n}\sim\overline{K_2}+\overline{K_n}$ Lemma~\ref{lem:V-1} applies if the vertex $v$ is taken in the partition $\overline{K_2}$.  A general result is the following.

\begin{table}[t]
\begin{center}
\begin{tabular}{c|c|c|c|c}
$m$&$n$&Graph $G$&$\mu(G)$&Reference\\\hline
1 &1 & $P_2$& 2&Lemma~\ref{lem:PnCn}\\\hline
1 &$n>1$& $K_{1,n}$&n&Corollary~\ref{cor:tree}\\\hline
2 & 2 & $C_4$ & 3&Lemma~\ref{lem:PnCn}\\\hline
2 & $n>2$& $K_{2,n}$&n+1&Lemma~\ref{lem:V-1}\\\hline
$m\geq 3$ & $n\geq 3$ &  $K_{m,n}$&n+m-2&Theorem~\ref{theo:Kmn}
\end{tabular}
\end{center}
 \caption{Values of $\mu(G)$ for $G\sim K_{m,n}$ for all the possible value of $m$ and $n$ such that $m\leq n$. }\label{tab:bip}
\end{table}

\begin{theorem}\label{theo:Kmn}
 Let $G$ be a complete bipartite graph $K_{m,n}$ such that $m\geq 3$ and $n\geq 3$. Then $\mu(G)=m+n-2$.
\end{theorem}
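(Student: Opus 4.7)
The plan is to work directly with the bipartition. Let $A, B$ be the two parts of $K_{m,n}$ with $|A|=m$ and $|B|=n$, and for a candidate \mvs $P$ write $P_A = P \cap A$ and $P_B = P \cap B$. The only non-trivial distances to analyse are between two vertices in the same part: any two vertices $u,v \in A$ are at distance $2$, and the shortest $(u,v)$-paths are exactly the paths $(u,b,v)$ for $b \in B$; symmetrically for two vertices in $B$. Vertices in different parts are adjacent and hence automatically in mutual visibility.

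From this I would immediately read off the upper bound. If $|P_A| \geq 2$, then in order for any two points of $P_A$ to be mutually visible there must exist some $b \in B \setminus P_B$, i.e.\ $|P_B| \leq n-1$. Symmetrically, if $|P_B| \geq 2$, then $|P_A| \leq m-1$. Hence in the ``balanced'' case $|P_A|\geq 2$ and $|P_B|\geq 2$ we get $|P| \leq (m-1)+(n-1) = m+n-2$. The remaining cases $|P_A|\leq 1$ or $|P_B|\leq 1$ give $|P|\leq n+1$ or $|P|\leq m+1$ respectively, both of which are at most $m+n-2$ as soon as $m\geq 3$ and $n\geq 3$. This establishes $\mu(K_{m,n}) \leq m+n-2$.

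For the matching lower bound I would exhibit the explicit set $P = (A\setminus\{a\}) \cup (B\setminus\{b\})$ for arbitrary $a\in A$, $b\in B$, of size $m+n-2$. Checking it is a \mvs is immediate: two points in $A\setminus\{a\}$ see each other along $(u,b,v)$ since $b\notin P$, two points in $B\setminus\{b\}$ see each other along $(u,a,v)$ since $a\notin P$, and points in different parts are adjacent. Combined with the upper bound this yields the equality $\mu(K_{m,n}) = m+n-2$.

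There is no real obstacle here; the only delicate point is making sure the case split on $|P_A|$ and $|P_B|$ really covers everything and that the small cases ($|P_A|\leq 1$ or $|P_B|\leq 1$) are dismissed using the hypothesis $m,n \geq 3$, which is exactly what separates this statement from the entries of Table~\ref{tab:bip} handled by Lemma~\ref{lem:V-1} and Lemma~\ref{lem:PnCn}.
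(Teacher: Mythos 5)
Your proof is correct, and while the lower bound is the same as the paper's (delete one vertex $a$ from one part and one vertex $b$ from the other; vertices within a part see each other through the deleted vertex of the opposite part, and cross-part pairs are adjacent), your upper bound takes a genuinely different route. The paper excludes $\mu(G)=m+n$ by the characterization of cliques (Lemma~\ref{lem:char}) and excludes $\mu(G)=m+n-1$ by observing that the hypothesis of Lemma~\ref{lem:V-1} fails for $K_{m,n}$ when $m,n\geq 3$; your argument instead works directly with an arbitrary \mvs $P$, splits it as $P_A\cup P_B$, and counts: if both $|P_A|\geq 2$ and $|P_B|\geq 2$, mutual visibility inside each part forces an unoccupied vertex in the opposite part, giving $|P|\leq (m-1)+(n-1)$, while the degenerate cases $|P_A|\leq 1$ or $|P_B|\leq 1$ are killed by $m,n\geq 3$. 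Your version is self-contained and makes completely explicit where the hypothesis $m,n\geq 3$ is used (it is exactly what rules out the $|P|\leq n+1$ and $|P|\leq m+1$ cases and separates this regime from the $K_{2,n}$ row of Table~\ref{tab:bip}); the paper's version is shorter because it reuses Lemma~\ref{lem:V-1}, which also covers $K_{2,n}$ and the join construction of Corollary~\ref{cor:join}, at the cost of leaving the reader to verify that no vertex of $K_{m,n}$ satisfies that lemma's condition. Both are complete; your case split does cover all possibilities, so there is no gap.
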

\begin{proof}
First we notice that $\mu(G)\leq m+n-2$ because $\mu(G)=m+n$ would imply that $G$ is a clique graph and $\mu(G)=m+n-1$ is not possible because Lemma~\ref{lem:V-1} does not apply. 
 To show that $\mu(G)=m+n-2$, it is sufficient to find exactly two vertices  that are not in the maximum \mvs consisting of all other vertices. Since $G\sim \overline{K_m} + \overline{K_n}$, we take a vertex $u$ from $
 \overline{K_m}$ and a vertex $v$ from $\overline{K_m}$. 
 
 Each point $w$ in $\overline{K_m}-v$ is in mutual visibility with other points in $\overline{K_m}-v$ because of vertex $u$. Furthermore, $w$ is adjacent to points in $\overline{K_n}-u$. Symmetrically, points in $\overline{K_n}-u$ are in mutual visibility because of vertex $v$ and are adjacent to all other points.
\end{proof}

We can generalize the results of Lemma~\ref{lem:V-1} and Theorem~\ref{theo:Kmn} to more general graphs resulting from a join operation.

\begin{corollary}\label{cor:join}
 Let $G_1=(V_1,E_1)$ and $G_2=(V_2,E_2)$ be two graphs and $J= G_1 + G_2=(V,E)$ their join. Then one of the following three cases holds:
 \begin{enumerate}
  \item $\mu(J)=|V| \iff G_1$ and $G_2$ are clique graphs
  \item $\mu(J)=|V|-1  \iff \mu(J)\not=|V|$  and $\mu(G_1)\geq |V_1|-1$ or $\mu(G_2)\geq |V_2|-1$
  \item $\mu(J)=|V|-2 \iff \mu(G_1)< |V_1|-1$ and $\mu(G_2)< |V_2|-1$.  
 \end{enumerate}
\end{corollary}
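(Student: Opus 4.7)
The plan is to first show that $\mu(J) \geq |V| - 2$ holds unconditionally, so that the three listed cases exhaust all the possibilities, and then to distinguish them using Lemma~\ref{lem:char} and Lemma~\ref{lem:V-1}. For the universal lower bound, I would fix any $u \in V_1$ and $v \in V_2$ and verify that $P = V \setminus \{u, v\}$ is a mutual-visibility set: two points both in $V_1 \setminus \{u\}$ are either adjacent in $G_1$ (hence mutually visible at distance one) or are seen through the length-two path via $v \notin P$; symmetrically for points both in $V_2 \setminus \{v\}$; and points lying in different parts are adjacent because of the join edges. Case~1 is then an immediate instance of Lemma~\ref{lem:char}(3): $\mu(J) = |V|$ iff $J$ is a clique, which, since all cross edges already exist in $J$ by definition of the join, holds iff both $G_1$ and $G_2$ are cliques.

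For Case~2, I plan to prove the key equivalence $\mu(J) \geq |V| - 1$ iff $\mu(G_1) \geq |V_1| - 1$ or $\mu(G_2) \geq |V_2| - 1$ by translating Lemma~\ref{lem:V-1} between $J$ and the factors. Concretely, for a candidate vertex $v \in V_1$, every $u \in V_2$ is automatically adjacent to $v$ in $J$, so the hypothesis of Lemma~\ref{lem:V-1} applied to $J$ is trivially met on the $V_2$ side. On the $V_1$ side, the identity $deg_{J-v}(u) = deg_{G_1-v}(u) + |V_2|$ makes the degree inequality $deg_{J-v}(u) < |V| - 2$ equivalent to $deg_{G_1 - v}(u) < |V_1| - 2$, so the condition of Lemma~\ref{lem:V-1} on $J$ with $v \in V_1$ becomes exactly the condition of Lemma~\ref{lem:V-1} on $G_1$. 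Arguing symmetrically on the $V_2$ side yields the biconditional. Case~2 then follows by combining this with the assumption $\mu(J) \neq |V|$.

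Case~3 is a direct consequence: since we always have $\mu(J) \geq |V| - 2$, and by the contrapositive of the equivalence established for Case~2, $\mu(J) < |V| - 1$ iff $\mu(G_1) < |V_1| - 1$ and $\mu(G_2) < |V_2| - 1$, the value $\mu(J) = |V| - 2$ arises precisely in that situation. The main obstacle is the bookkeeping in Case~2: one must carefully check that the degree shift introduced by passing from $G_i - v$ to $J - v$ lands the Lemma~\ref{lem:V-1} thresholds on the correct side, and that the join edges fully absorb the adjacency requirement on vertices in the opposite part so that no genuine new constraint appears in $J$ beyond those inherited from $G_1$ and $G_2$.
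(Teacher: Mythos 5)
Your proposal is correct, and for the central case (case~2) it takes a genuinely different route from the paper. The paper argues directly with mutual-visibility sets in $J$: for the forward direction it takes a maximum set missing a single vertex $v\in V_1$ and observes that two non-adjacent points of $G_1-v$ can only see each other through $v$ (every other possible middle vertex is a point), which yields the hypothesis of Lemma~\ref{lem:V-1} for $G_1$; for the backward direction it explicitly builds a mutual-visibility set of $J$ of size $|V|-1$ from the vertex missed in $G_1$, checking visibility via shortest paths of length two. You instead never touch shortest paths in case~2: you apply Lemma~\ref{lem:V-1} to $J$ itself and transfer its degree/adjacency condition to the factors via the identity $deg_{J-v}(u)=deg_{G_1-v}(u)+|V_2|$ (so the threshold $deg_{J-v}(u)<|V|-2$ matches $deg_{G_1-v}(u)<|V_1|-2$ exactly) together with the automatic cross-adjacency from the join edges, obtaining the biconditional $\mu(J)\geq|V|-1 \iff \mu(G_1)\geq|V_1|-1$ or $\mu(G_2)\geq|V_2|-1$ in one shot. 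This is cleaner and avoids re-deriving pieces of Lemma~\ref{lem:V-1}'s proof, at the cost of being less illustrative of the visibility structure; the explicit $V\setminus\{u,v\}$ construction is still needed (you use it upfront as a universal bound $\mu(J)\geq|V|-2$, the paper only in case~3), and your reorganization has the added benefit of making explicit the exhaustiveness logic by which case~3 follows from the negation of cases~1 and~2, which the paper leaves implicit. (Both arguments, yours and the paper's, tacitly assume $V_1$ and $V_2$ are nonempty, as the join construction requires.)
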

\begin{proof}
 \begin{enumerate}
  \item Obvious by Lemma~\ref{lem:char}.
  \item $(\Rightarrow)$ If $\mu(J)=|V|-1$ then there is a vertex $v$ that is not a point. Without loss of generality, let $v\in V_1$. Then each pair $x,y$ of non adjacent points in $G_1-v$ must be connected to $v$ to be in mutual visibility. Hence, by Lemma~\ref{lem:V-1}, $\mu(G_1)\geq|V|-1$.\\
  $(\Leftarrow)$ Without loss of generality, assume  $\mu(G_1)\geq |V_1|-1$.  If $\mu(G_1)= |V_1|-1$, let $v\in V_1$ be the only vertex of $G_1$ that is not a point, otherwise, if $G_1$ is a clique graph, let $v$ be any point of $V_1$. Given $\mu(G_1)\geq |V|-1$,  all the points in  $V_1\setminus \{v\}$ are in mutual visibility. Any pair of points in $V_2$ are in mutual visibility since either adjacent or connected to $v$. Since any point in $V_2$ is adjacent to any point in $V_1$, we conclude that all the points are in mutual visibility and then $\mu(J)=|V|-1$
  \item Let $v_1\in V_1$ and $v_2 \in V_2$, and let $P=V\setminus\{v_1,v_2\}$ be the set of points. Then $P$ is a \mvs, since any point in $V_1\setminus \{v_1\}$ ($V_2\setminus \{v_2\}$, resp.) is adjacent to any point in  $V_2$ ($V_1$, resp.) and it is in mutual visibility with any non adjacent point of $V_1$ with a shortest path of length two passing through $v_2$ ($v_1$, resp.).
 \end{enumerate}
\end{proof}

Cographs are well studied in literature and were independently rediscovered many times, since they represent the class of graphs that can be generated from $K_1$ by complementation and disjoint union (see Theorem 11.3.3 in~\cite{graph_classes_survey} for equivalent definitions).  As reported in Section~\ref{sec:notation}, a connected cograph can be obtained starting from $K_1$ by a sequence of splittings, that is by adding a sequence of twin vertices. In~\cite{GDS12} the notion of \emph{twin-free subgraph} was introduced.

\begin{definition}\emph{\hspace{-0.17cm}\cite{GDS12}}
Let $G=(V,E)$ be a graph. The \emph{twin-free subgraph} $\tf{G}$ of $G$ is the subgraph
$G[V']$ induced by the largest set of vertices $V'\subseteq V$ such that $G[V']$ has no twins.
\end{definition} 
 Since any induced subgraph of a connected cograph $G$ is a cograph, then $\tf{G}\sim K_1$ and it can be obtained by the polynomial time {\sc Pruning} algorithm presented in the same paper. This algorithm removes any vertex $v$ of $G$ that has a twin, and it applies the same procedure to $G-v$ until a graph without twin vertices is reached. Then it provides a sequence of vertex removals that corresponds to a sequence of splitting operations  to rebuild the whole $G$ starting from $K_1$ and in such a way that $G$ results the join of two of its subgraphs. Based on this observation we can provide the following result.

 \begin{theorem}
  Let $G=(V,E)$ be a connected cograph. Then $\mu(G)\geq |V|-2$ and a maximum \mvs can be computed in polynomial time.
 \end{theorem}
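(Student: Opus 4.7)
The plan is to exploit the fact, recalled in the paragraph preceding the theorem, that every connected cograph $G=(V,E)$ with $|V|\ge 2$ can be written as a join $G=G_1+G_2$ of two of its induced subgraphs on non-empty vertex sets $V_1,V_2$ partitioning $V$. Equivalently, the complement $\overline{G}$ is disconnected, and one may take $V_1,V_2$ to be any bipartition of $V$ into unions of connected components of $\overline{G}$; such a bipartition is computable in linear time from $\overline{G}$.

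Once this decomposition is obtained, I would apply Corollary~\ref{cor:join} directly to $G=G_1+G_2$: it forces $\mu(G)\in\{|V|,|V|-1,|V|-2\}$, which immediately yields the claimed lower bound $\mu(G)\ge|V|-2$. For the constructive/algorithmic half, I would use Algorithm \mva (polynomial by Theorem~\ref{theo:P}) as a black-box tester, proceeding in three stages aligned with the three cases of the corollary. First, test whether $V$ itself is a \mvs; by Lemma~\ref{lem:char}(3) this holds iff $G$ is a clique (Case 1), and we return $P=V$. Otherwise, iterate over $v\in V$ and test whether $V\setminus\{v\}$ is a \mvs (Case 2); if some $v$ works, return that set. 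Otherwise we are in Case 3, and I would return $P=V\setminus\{v_1,v_2\}$ for arbitrary $v_1\in V_1$, $v_2\in V_2$ coming from the join decomposition, whose mutual visibility is guaranteed by Corollary~\ref{cor:join}(3). Since \mva is invoked at most $|V|+1$ times, the total running time is polynomial.

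The main obstacle I anticipate is justifying the join decomposition without invoking heavy external cograph machinery. The cleanest route is the complement-connectivity characterization: a connected cograph has a disconnected complement (a standard consequence of the splitting-based definition recalled in Section~\ref{sec:notation}, since both true-twin and false-twin additions create cut vertices in $\overline{G}$), so the required bipartition $(V_1,V_2)$ is obtained essentially for free via BFS on $\overline{G}$. Beyond this point, the argument is a direct reduction to Corollary~\ref{cor:join}, with Algorithm \mva supplying the polynomial-time certificate of the output set.
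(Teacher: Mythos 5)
Your proposal is correct and follows the same overall reduction as the paper---obtain a join decomposition $G=G[V_1]+G[V_2]$, invoke Corollary~\ref{cor:join} to force $\mu(G)\in\{|V|,|V|-1,|V|-2\}$, and then decide which case holds with polynomially many calls to Algorithm \mva (Theorem~\ref{theo:P})---but you obtain the decomposition by a genuinely different route. The paper builds the partition from the splitting sequence itself: it computes $\tf{G}\sim K_1$ via the {\sc Pruning} algorithm of~\cite{GDS12}, notes that connectivity forces the first splitting to create a true twin, and then assigns every later split vertex to the side of the vertex it was split from, so that $V_1$ and $V_2$ end up completely joined. You instead take $V_1,V_2$ to be unions of connected components of $\overline{G}$, using the fact that a connected cograph on at least two vertices has a disconnected complement; this bypasses the pruning machinery and yields the partition by a single BFS on $\overline{G}$. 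Your three-stage use of \mva (test $V$, test every $V\setminus\{v\}$, otherwise output $V\setminus\{v_1,v_2\}$ with $v_1\in V_1$, $v_2\in V_2$) is, if anything, stated more explicitly than in the paper's proof, and it is sound: the proof of Corollary~\ref{cor:join}(3) shows that $V\setminus\{v_1,v_2\}$ is a \mvs of any join, so once the first two tests fail this set is maximum. The one point to repair is your side justification of co-disconnectedness: twin additions do not in general ``create cut vertices in $\overline{G}$'' (for $G\sim C_4$, obtainable by splittings, $\overline{G}\sim K_2\cup K_2$ has no cut vertex). The correct elementary argument is an induction along the splitting sequence: adding a true (resp.\ false) twin in $G$ adds a false (resp.\ true) twin in $\overline{G}$, and a twin of $v$ is adjacent only to vertices in the component containing $v$, so every graph on at least two vertices obtained by splittings is disconnected or has a disconnected complement; connectivity of $G$ then gives that $\overline{G}$ is disconnected (alternatively, this equivalence is part of Theorem~11.3.3 of~\cite{graph_classes_survey}, which the paper already cites). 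With that substitution your argument is complete and runs in polynomial time.
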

 \begin{proof}
 Let us show that the vertices of any connected cograph $G=(V,E)$ can be partitioned into two subsets $V_1$ and $V_2$ such that $G= G[V_1] +G[V_2]$.
 
 Let $v_1$ be the only vertex of $\tf{G}$ and let $V_1=\{v_1\}$. Since $G$ is connected, the first splitting operation to rebuild $G$ from $v_1$ produces a true twin $v_2$ of $v_1$ and the resulting graph is a $K_2$. Let $V_2=\{v_2\}$. Now add any vertex $v_1'$ ($v_2'$, resp.) produced by a splitting operation on a vertex of $V_1$ ($V_2$, resp.) to $V_1$ ($V_2$, resp.). Eventually, each vertex in $V_1$ is connected to all the vertices in $V_2$ and vice versa. Hence  $G= G[V_1] +G[V_2]$. By applying Corollary~\ref{cor:join}, $\mu(G)\geq |V|-2$. If all the splitting operations generate true twins, then $G$ is a clique graph and $\mu(G)=|V|$. By Algorithm~\mva, we can test if $V\setminus \{v\}$  is a \mvs of $G$ for some vertex $v\in V$, and then $\mu(G)=|V|-1$. Otherwise, $\mu(G)=|V|-2$ and $V\setminus \{v_1, v2\}$ is a \mvs of $G$.
\end{proof}

In~\cite{acckt:19}, Theorem 4.2, it is proved that if $G$ is a connected cograph, then $\gp{G}=\max\{\omega(G),\omega(\overline{G})\}$,



\section{Conclusions}\label{sec:concl}

This paper is a first study on the concepts of \mvs and \mvn. It would be interesting to study the same concepts for weighted graphs and directed graphs. The latter case is very different from the studied one since, given a set of points $P$, the relation of  visibility between two points is not symmetric. 

From a computational point of view, the \mvp problem could be analyzed with respect to  approximability and  parameterized complexity.

Given a graph, we have shown some relations between the \mvn and  both the general position number and the maximum degree of the graph. It would be interesting to study relations with other invariants, such as the treewidth or the clique-width of a graph. 

Finally, different kinds of visibility can be investigated. For example, the \emph{single point visibility}: find the vertex in the graph seen by the largest set of points.

\section*{Acknowledgments}
The author wish to thank Prof. Sandi Klav{\v z}ar for pointing out the concept of general position set in graphs and its relation with the concept of \mvs.\\
Figure~\ref{fig:block} was adapted from a drawing by David Eppstein, whose remarkable contributions to Wikipedia have greatly facilitated the writing of this article.



\bibliographystyle{abbrv}
\bibliography{references,Miei}

\end{document}